\newtheorem{thm}{Theorem}[section]
\newtheorem{lm}{Lemma}[section]
\newtheorem{defn}{Definition}[section]
\newtheorem{pro}{Proposition}[section]
\newtheorem{re}{Remark}[section]
\newtheorem{cla}{Claim}[section]
\DeclareMathOperator{\dist}{dist}
\DeclareMathOperator{\ric}{Ric}
\DeclareMathOperator{\Area}{Area}
\DeclareMathOperator{\ADM}{ADM}
\DeclareMathOperator{\Haw}{H}
\DeclareMathOperator{\SH}{\Sigma_{\scriptscriptstyle H}}
\def\SO{{\Sigma_{\scriptscriptstyle O}}}
\def\SH{{\Sigma_{\scriptscriptstyle H}}}
\def\SH{{\Sigma_{\scriptscriptstyle H}}}
\def\ADMm{\mathfrak{m}_{\scriptscriptstyle\ADM}}
\def\Hm{\mathfrak{m}_{\scriptscriptstyle\Haw}}
\begin{document}
\title[]
{On the Rigidity of Riemannian-Penrose Inequality for Asymptotically Flat 3-manifolds with Corners}

 \author{Yuguang Shi}
\address[Yuguang Shi]{Key Laboratory of Pure and Applied Mathematics, School of Mathematical Sciences, Peking University, Beijing, 100871, P.\ R.\ China}
\email{ygshi@math.pku.edu.cn}
\thanks{$^1$Research partially supported by NSFC 11671015}

\author{Wenlong Wang}
\address[Wenlong Wang]{Key Laboratory of Pure and Applied Mathematics, School of Mathematical Sciences, Peking University, Beijing, 100871, P.\ R.\ China}
 \email{wangwl@math.pku.edu.cn}

\author{Haobin Yu}
\address[Haobin Yu]{Key Laboratory of Pure and Applied Mathematics, School of Mathematical Sciences, Peking University, Beijing, 100871, P.\ R.\ China}
 \email{robin1055@126.com}

\renewcommand{\subjclassname}{
  \textup{2010} Mathematics Subject Classification}
\subjclass[2010]{Primary 53C20; Secondary 83C99}

\date{July, 2017}

\begin{abstract}
In this paper we prove a rigidity result for the equality case of the Penrose inequality on $3$-dimensional asymptotically flat manifolds with nonnegative scalar curvature and corners. Our result also has deep connections with the equality cases of Theorem 1 in \cite{Miao2} and Theorem 1.1 in \cite{LM}.
\end{abstract}

\keywords{Penrose inequality, asymptotically flat manifold with corner, stable CMC surfaces }
\maketitle
\markboth{Shi Yuguang, Wang Wenlong and Yu Haobin}{Rigidity of RPI}

\section{Introduction}
In this paper, we are interested in what happens when the equality holds in the Penrose inequality on asymptotically flat manifolds with corners (see Theorem \ref{RPI for mfds with corner} below). This problem has some deep connections with the rigidity of compact manifolds with nonnegative scalar curvature and nonempty boundaries.  Let $M$ be an oriented $n$-dimensional smooth manifold with inner boundary $\SH$. We assume that there exists a bounded domain $\Omega\subset M$ with $\partial\Omega=\Sigma_{\scriptscriptstyle H}\cup\SO$, and
$\SO$ is a smooth hypersurface in $M$. To state precisely, we first review some notions (see \cite{ADM,HI, MM,Miao1}).
\begin{defn}
{\bf A metric $g$ admitting corners along $\Sigma$}
is defined to be a pair of $(g_{-}, g_{+})$, where $g_{-}$ and
$g_{+}$ are smooth metrics on $\Omega$ and
$M \setminus \bar{\Omega}$
so that they are $C^2$ up to the boundary and they induce the same metric on
$\Sigma$.
\end{defn}

\begin{defn} Given $g =(g_{-}, g_{+})$, we say $g$ is
{\bf asymptotically flat} if the manifold
$(M\setminus \Omega, g_{+})$ is asymptotically
flat (AF) in the usual sense, i.e.
 if there is a compact subset $K$ such that $g_+$ is smooth on  $M\setminus K$, and $M\setminus K$ has finitely many components $E_k$, $1\le k\le l$,  each $E_k$ is called an end of $M$, such that each $E_k$ is diffeomorphic to $\mathbb{R}^n\setminus B(R_k)$ for some Euclidean ball $B(R_k)$, and  the followings are true:
In the standard coordinates $x^i$ of $\mathbb{R}^n$,
 \begin{equation}
{g_+}_{ij}=\delta_{ij}+\sigma_{ij}
\end{equation}
with
\begin{equation} \label{daf2}
\sup_{E_k} \left\{\sum_{s=0}^2|x|^{\tau+s}|\partial^s\sigma_{ij}|\right\}<\infty
\end{equation}
for $\tau>\frac{n-2}2$, where $\partial f$ and $\partial^2f$ are the gradient and Hessian of $f$ with respect to the Euclidean metric.
\end{defn}

If the scalar curvature of $g_+$ is $L^1$-integrable on $M\setminus\Omega$, then we can define  the ADM mass as the following:
\begin{defn}
The {\bf Arnowitt-Deser-Misner (ADM) mass}  of an end $E$ of an AF  manifold $M$ is defined as:
\begin{equation} \label{defadm1}
\ADMm(E)=\lim_{r\to\infty}\frac{1}{{ 2(n-1)}\omega_{n-1}}\int_{S_r}
\left(g_{ij,i}-g_{ii,j}\right)\nu^jd\Sigma_r^0,
\end{equation}
\end{defn}

\begin{defn} The {\bf mass} of $g=(g_{-}, g_{+})$ is defined
to be the ADM mass of $g_{+}$ whenever it exits.
\end{defn}
\begin{defn} 
Let $\Sigma$ be a hypersurface of an AF manifold (may have corners). We say $\tilde\Sigma$ is a {\bf minimizing exclosure} of $\Sigma$ if it minimizes area among all hypersurfaces in $M$ that
enclose $\Sigma$. We say $\Sigma$ is {\bf (strictly) outer minimizing} if it is a (the unique) minimizing exclosure of itself. 
\end{defn}

In \cite{MM}, Mccormick and Miao established the following
Riemannian Penrose inequality on asymptotically flat manifolds with corners along
a hypersurface.
\begin{thm}\label{RPI for mfds with corner}
Let $M^n$ denote a noncompact differentiable manifold of dimension
$3\leq n\leq 7$, with compact boundary $\SH$. Let $\SO$ be an embedded hypersurface in the
interior of $M^n$ such that $\SO$ and $\SH$ bounds a bounded domain $\Omega$. Suppose $g$ is a $C^0$
metric on $M^n$ satisfying:\\
$\bullet$ $g$ is smooth on both $M^n\setminus\Omega$ and $\bar{\Omega}=\Omega\cup\SO\cup\SH$;\\
$\bullet$ $(M^n\setminus\Omega, g)$ is asymptotically flat;\\
$\bullet$ $g$ has nonnegative scalar curvature away from $\SO$;\\
$\bullet$ $H_{-}\geq H_{+}$, where $H_{-}$ and $H_{+}$  denote the mean curvature of $\SO$ in $(\Omega, g)$ and
$(M^n\setminus\Omega, g)$, respectively, with respect to the infinity-pointing normal;\\
$\bullet$ $\SH$ is a minimal hypersurface in $(\Omega, g)$ and $\SH$ is outer minimizing in $(M^n, g)$.\\
Then the Riemannian Penrose inequality holds on $(M^n, g)$, i.e.
\[
\mathfrak{m}\geq\frac{1}{2}\left(\frac{|\SH|}{\omega_{n-1}}\right)^{\frac{n-2}{n-1}}
\]
where $\mathfrak{m}$ is the ADM mass of $(M^n\setminus\Omega, g)$.
\end{thm}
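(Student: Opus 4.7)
My plan is to reduce the corner case to the smooth Riemannian Penrose inequality of Bray--Lee (valid for $3\le n\le 7$) by approximating $g$ with a family of genuinely smooth asymptotically flat metrics of nonnegative scalar curvature, and then passing to the limit. The starting point is Miao's mollification procedure (cf.~\cite{Miao1}): because the hypothesis $H_-\ge H_+$ holds across $\SO$, one can produce a family of $C^2$ metrics $g_\delta$ on $M^n$ that coincide with $g$ outside a $\delta$-tubular neighborhood of $\SO$, converge to $g$ uniformly as $\delta\to 0$, and whose scalar curvature $R(g_\delta)$, while not necessarily nonnegative in the transition layer, is bounded below by $-C$ and satisfies $\int_M R_-(g_\delta)\, dv_{g_\delta}\to 0$. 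The negative part of $R(g_\delta)$ morally encodes the jump $H_--H_+\ge 0$, and the mollification is designed precisely so that this jump can be absorbed.

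Next, I would conformally correct $g_\delta$. Solving
\begin{equation*}
-\tfrac{4(n-1)}{n-2}\Delta_{g_\delta} u_\delta + R(g_\delta)\, u_\delta = 0, \qquad u_\delta\to 1 \text{ at infinity in each end,}
\end{equation*}
on $M^n$ (with a Neumann-type condition on $\SH$ that preserves its minimality, or equivalently by deleting $\SH$ and solving with $u_\delta\to 0$ at a conformal boundary) yields a positive solution provided the $L^{n/2}$-norm of $R_-(g_\delta)$ is small, which is the case for $\delta$ small. The rescaled metric $\tilde g_\delta = u_\delta^{4/(n-2)} g_\delta$ is then smooth and scalar-flat on $M^n\setminus\SH$, with $\SH$ still a minimal hypersurface. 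Standard elliptic theory gives $u_\delta\to 1$ in $C^2_{\mathrm{loc}}$ on the exterior region and $u_\delta = 1 + A_\delta/|x|^{n-2} + o(|x|^{2-n})$ with $A_\delta\to 0$, so
\begin{equation*}
\ADMm(\tilde g_\delta) = \ADMm(g_\delta) + 2 A_\delta \longrightarrow \ADMm(g).
\end{equation*}
Moreover, since $\SH$ is outer minimizing in $(M^n,g)$ and $\tilde g_\delta\to g$ in $C^0$, the area functional is lower-semicontinuous along the family, giving $|\SH|_{\tilde g_\delta}\ge |\SH|_{g}-o(1)$.

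Now $\tilde g_\delta$ is smooth, asymptotically flat, scalar-flat on $M^n\setminus\SH$, and its inner boundary $\SH$ is a compact minimal hypersurface; replacing $\SH$ by the area-outer-minimizing hull if necessary (which only decreases area and still bounds a horizon), the Bray--Lee Riemannian Penrose inequality for smooth AF manifolds in dimensions $3\le n\le 7$ applies and yields
\begin{equation*}
\ADMm(\tilde g_\delta)\ \ge\ \tfrac12\left(\tfrac{|\SH|_{\tilde g_\delta}}{\omega_{n-1}}\right)^{(n-2)/(n-1)}.
\end{equation*}
Passing to the limit $\delta\to 0$ and combining the two convergence statements above gives the desired inequality for $g$. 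The main obstacle I anticipate is the conformal step: one must ensure that the linear equation is solvable with a positive solution and that $\SH$ remains minimal (or, more flexibly, remains enclosed by a horizon of only slightly larger area) after the conformal rescaling, together with a quantitative control on the mass defect $A_\delta$ in terms of $\|R_-(g_\delta)\|_{L^{n/2}}$. The outer-minimizing hypothesis on $\SH$ is precisely what keeps this limiting area comparison under control; without it, area could be lost in the limit.
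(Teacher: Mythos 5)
This theorem is not proved in the paper you are reading: the authors quote it from McCormick and Miao \cite{MM}. Your plan --- Miao mollification across $\SO$, conformal rescaling to nonnegative scalar curvature, the smooth Riemannian Penrose inequality of Bray--Lee in dimensions $3\le n\le 7$, then pass to the limit $\delta\to 0$ --- is indeed the strategy of that reference, so at the level of scheme you have reconstructed it correctly.

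Two parts of your outline are gaps, and they are precisely where the real work lives. First, the boundary condition for $u_\delta$ on $\SH$: the Neumann condition $\partial_\nu u_\delta=0$ is the right one, since it keeps $\SH$ minimal under the conformal change; your alternative of deleting $\SH$ and sending $u_\delta\to 0$ at a conformal boundary is \emph{not} equivalent --- it replaces the horizon by a cylindrical end and severs the link between $|\SH|$ and the mass bound, so it must be discarded. Second, the claim that ``the area functional is lower-semicontinuous, giving $|\SH|_{\tilde g_\delta}\ge|\SH|_g-o(1)$'' controls the wrong quantity. The area of the fixed surface $\SH$ converges trivially under $C^0$-convergence. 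What Bray--Lee actually sees is the outermost minimal hypersurface of $(M,\tilde g_\delta)$, i.e.\ the boundary $\Sigma_\delta$ of the minimizing hull of $\SH$, and what one must show is $|\Sigma_\delta|_{\tilde g_\delta}\ge|\SH|_g-o(1)$. That is the place where the hypothesis that $\SH$ is outer minimizing in $(M,g)$ enters essentially and quantitatively (together with $u_\delta\ge1$ and the shrinking support of the smoothing region); it is not a generic semicontinuity statement. Note that the present paper proves a stability result of exactly this flavor for its own purposes --- Lemma \ref{minimizing hull} and Remark \ref{outerminimizing in small perturbation} --- and a similar mechanism is required here. Finally, $A_\delta\to 0$ should be derived from the $L^{n/2}$-smallness of $R_-(g_\delta)$ via the maximum principle and a Green's function estimate, rather than asserted; as written it is a claim, not a step.
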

In this paper, we study what happens when above inequality holds in $3$ dimension.

We first introduce the following notion.
\begin{defn}
Let $\Sigma\subset\bar{\Omega}$ be a closed minimal surface. $\Sigma$  is called {\bf strictly stable}, if
for any nonzero test function $f\in W^{1,2}(\Sigma)$, it holds
\[
\int_{\Sigma}f\mathcal{L}f>0,
\]
where $\mathcal{L}$ is the stability operator given by $\mathcal{L}=-\Delta-(\ric(\nu,\nu)+|A|^2)$. Here, $\Delta$ is the Laplacian operator with respect to the induced metric on $\Sigma$, $\nu$ and $A$ are  the outward unit normal vector and the second fundamental form of $\Sigma$ in $\Omega$ respectively.
\end{defn}

Our main result is stated as following.
\begin{thm}\label{rigidity}
Let $g=(g_-,g_+)$ be an asymptotically flat metric on a three manifold $M$ satisfying the assumptions as in
Theorem \ref{RPI for mfds with corner}.
Suppose that $\SH$ is strictly stable, as well as strictly outer minimizing. And assume $$
\ADMm\left(g_+\right)=\Hm(\SH, g_-)=\mathfrak{m}.$$
Then we have the following rigidity conclusions: {\rm (i)} $H_-\equiv H_+$ on $\SO$. {\rm(ii)} $(\Omega, g_-)$ is static with vanishing scalar curvature. {\rm(iii)} If in addition, for any $p$ in $(\Omega, g_-)$, its minimal geodesic line to $\SH$ is contained in $\Omega$, and its distance function to $\SH$ is smooth, then $(\Omega, g_-)$ is Schwarzschild, i.e. there is an isometric embedding $F $: $(\Omega, g_-) \mapsto (\mathbb{X}_{\mathfrak{m}}, g_{\scriptscriptstyle{S}})$ that maps $\SH$ to the horizon of $(\mathbb{X}_{\mathfrak{m}}, g_{\scriptscriptstyle{S}})$, where $(\mathbb{X}_{\mathfrak{m}}, g_{\scriptscriptstyle{S}})$ is the Schwarzschild manifold with mass $\mathfrak{m}$.
\end{thm}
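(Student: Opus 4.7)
The plan is to establish (i), (ii), (iii) in sequence, using Miao's corner smoothing, the Huisken--Ilmanen weak inverse mean curvature flow, and the rigidity case of Geroch's Hawking-mass monotonicity.

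For (i), I would revisit the smoothing argument behind Theorem \ref{RPI for mfds with corner}. Miao's mollification across $\SO$ produces smooth metrics $g_\epsilon$ with $R(g_\epsilon)\ge 0$, $\mathfrak{m}(g_\epsilon)\to \mathfrak{m}$, and $|\SH|_{g_\epsilon}\to |\SH|_{g_-}$; the excess scalar curvature of $g_\epsilon$ is, to leading order, a mollifier of the nonnegative quantity $H_--H_+$ concentrated near $\SO$. If $H_->H_+$ on a set of positive $\mathcal{H}^2$-measure, the Schoen--Yau conformal trick (solving $-8\Delta u_\epsilon+R(g_\epsilon)u_\epsilon=0$ with $u_\epsilon\to 1$ at infinity) yields a scalar-flat conformal metric $u_\epsilon^4 g_\epsilon$ whose ADM mass is strictly less than $\mathfrak{m}(g_\epsilon)$ by an amount bounded below by a positive constant times $\int_{\SO}(H_--H_+)\,dA$. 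Strict outer minimizing of $\SH$ keeps $u_\epsilon$ close to $1$ near $\SH$, so the horizon area change is of lower order than the mass loss, and the smooth Huisken--Ilmanen RPI applied to $u_\epsilon^4 g_\epsilon$ contradicts the hypothesis $\mathfrak{m}=\sqrt{|\SH|_{g_-}/(16\pi)}$. Hence $H_-\equiv H_+$.

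For (ii), with $H_-=H_+$ on $\SO$, the metric $g=(g_-,g_+)$ has nonnegative distributional scalar curvature with no singular contribution along $\SO$. I would run weak IMCF $\{\Sigma_t\}$ from $\SH$ in $(M,g)$: strict outer minimizing rules out an initial jump, and Geroch monotonicity gives
\[
\mathfrak{m}=\Hm(\SH,g_-)\le \Hm(\Sigma_t)\le \liminf_{t\to\infty}\Hm(\Sigma_t)\le \ADMm(g_+)=\mathfrak{m},
\]
so $\Hm(\Sigma_t)\equiv \mathfrak{m}$ along the entire flow. The rigidity case of Geroch's formula then forces, on each regular leaf inside $\Omega$, $R(g_-)\equiv 0$, total umbilicity with $|A|^2=H^2/2$, $H$ constant on $\Sigma_t$, and Gauss curvature $H^2/4$. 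Combined with the smooth CMC foliation of a one-sided neighborhood of $\SH$ provided by strict stability, these conditions give a warped product structure on the flow region carrying an explicit static potential $f$ solving $\Delta f=0$ and $\nabla^2 f=f\,\ric(g_-)$; propagating $f$ across $\Omega$ by unique continuation for the static vacuum system then yields staticness and $R(g_-)=0$ throughout $\Omega$.

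For (iii), smoothness of $\rho=\dist_{g_-}(\cdot,\SH)$ together with minimizing geodesics to $\SH$ lying inside $\Omega$ gives a smooth foliation of $\Omega$ by equidistant surfaces $\{\rho=t\}$ with $|\nabla\rho|\equiv 1$. The CMC/umbilic structure from step (ii) forces these equidistant leaves to coincide with the level sets of the static potential, so $g_-=d\rho^2+\psi(\rho)^2 h$ with $h$ a round metric on $\SH$. The vacuum static equations reduce to a single ODE for $\psi$, and the initial conditions $\psi(0)^2|\SH|_h=|\SH|_{g_-}$, $\psi'(0)=0$ (the latter from $\SH$ being minimal) identify $\psi$ as the Schwarzschild radial function, giving the required isometric embedding $F$. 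The hardest step is (ii): running weak IMCF across the Lipschitz interface $\SO$ without jumps or premature termination, extracting pointwise rigidity from weak-sense saturation of Geroch's inequality, and then extending the static potential from the flow region to all of $\Omega$. Strict stability of $\SH$ is what secures the smooth foliation near $\SH$ and allows the static potential to extend regularly across the horizon.
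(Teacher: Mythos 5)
Your outline shares the paper's main ingredients (Miao smoothing, conformal deformation, Huisken--Ilmanen), but the execution has genuine gaps at the decisive points. In (i), the global Schoen--Yau conformal change $u_\epsilon^4 g_\epsilon$ destroys exactly the structure the Penrose inequality needs: after the deformation $\SH$ is in general no longer minimal (its new mean curvature is $4u_\epsilon^{-3}\partial_\nu u_\epsilon$) nor outer minimizing, so you cannot quote the smooth RPI with $\SH$ as horizon, and controlling the area of the new outermost minimal surface is not a matter of ``the horizon area change is of lower order than the mass loss.'' Moreover, the assertion that strict outer minimizing of $\SH$ ``keeps $u_\epsilon$ close to $1$ near $\SH$'' is a non sequitur: that is a PDE estimate, and since the deformation is global the deviation of $u_\epsilon$ from $1$ near $\SH$ is of the same order as the mass loss you are trying to exploit. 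This is precisely the obstruction the paper flags in the introduction and overcomes by constructing the strictly stable CMC/isoperimetric foliation near $\SH$ (Propositions \ref{foliation}, \ref{foliationminimizinghullsurf}), deforming only outside a fixed leaf $\Sigma_1$ with boundary condition $u\equiv 1$ on $\Sigma_1$, proving $\Sigma_1$ remains outer minimizing with $H>0$ under such perturbations (Lemmas \ref{deform1}, \ref{minimizing hull}), and replacing the horizon by $\Sigma_1$ via Bray's monotonicity $\Hm(\Sigma_1,g)\ge\Hm(\SH,g_-)$ together with $\ADMm\ge\Hm(\Sigma_1)$ from IMCF started at $\Sigma_1$. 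Without an argument of this type your contradiction in (i) does not close.

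For (ii) and (iii) the gaps are more structural. You propose to run weak IMCF from $\SH$ in the Lipschitz corner metric and extract pointwise Geroch rigidity on every leaf; besides the unresolved issue of monotonicity across $\SO$ (which the paper simply avoids, since its IMCF is always run in smoothed, conformally deformed $C^2$ metrics), the weak flow can jump over portions of $\Omega$ and its leaves eventually leave $\Omega$, so flow-leaf rigidity cannot give staticity of all of $\Omega$; and ``propagating $f$ across $\Omega$ by unique continuation'' is circular, because unique continuation presupposes the static vacuum equations already hold in the region you propagate into. The paper's route is different: if $(\Omega_1,g_-)$ were not static, Corvino's theorem (Theorem 1 in \cite{Cor}) gives a compactly supported perturbation fixing the boundary metric and mean curvature but with $R>0$ somewhere, which is then killed by the Case 2 conformal deformation plus IMCF to produce a strict mass drop, contradicting $\ADMm=\Hm(\SH)$; staticity of all of $\Omega$ then follows by exhausting with leaves $\Sigma_i\to\SH$ and passing to a limit of normalized potentials. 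Your proposal omits Corvino entirely, and no substitute is offered. Similarly, in (iii) you assume a global umbilic/warped-product structure that your step (ii) does not deliver; the paper only gets the Schwarzschild form near $\SH$ (Proposition \ref{ localSchwar}, via the Huisken--Ilmanen rigidity with $\Hm$ constant), matches potentials (Lemma \ref{potentialfunctions}), and then needs the Chru\'sciel--Delay/Biquard unique continuation in Fermi coordinates, which is exactly where the star-shaped hypothesis enters. The fact that your argument would make that hypothesis superfluous is itself a sign that the global warped-product claim is unjustified.
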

\begin{re} In conclusion {\rm(iii)}, the requirement for the shape of $\Omega$ can be viewed as a star-shaped condition in a sense. It might be stronger than needed. 
\end{re}

Theorem \ref{rigidity} also partially answers what happen if the equalities  of   inequalities in Theorem 1 in \cite{Miao2} and Theorem 1.1 in \cite{LM} hold, i.e. if $(\Omega, g_-)$ satisfies Theorem \ref{rigidity}, and corresponding equalities of Theorem 1 in \cite{Miao1} and Theorem 1.1 in \cite{LM} hold, then $(\Omega, g_-)$ is isometric to a domain of a Scharzschild manifold, and $\SH$ is the horizon of this Scharzschild manifold (here for simplicity, we regard the isometric isomorphism as the identity map).

Two key ingredients of our argument are making conformal deformations and using the inverse mean curvature flow (IMCF) on AF manifolds established in \cite{HI}. However, $\SH$, the inner boundary of $(\Omega, g_-)$ being ``horizon'' may not be preserved under global conformal deformations. 

To overcome this difficulty, we construct  a foliation  consisting of constant mean curvature (CMC) surfaces near $\SH$ (see Proposition \ref{foliation}). This foliation possesses some interesting properties that fit IMCF well and turn out to be crucial in the proof. One of these properties is that each leaf is outer minimizing with respect to metrics under small perturbations (not only restricted to conformal deformations,  see Proposition \ref{foliationminimizinghullsurf}  and Lemma \ref{minimizing hull} below). Another is that the mean curvature of each leaf is positive and this property is preserved under small conformal deformations. Furthermore, this foliation is also an isoperimetric foliation so that the Hawking mass is non-decreasing along it (see \cite{Bray}). In particular, the Hawking mass of any leaf is not less than the Hawking mass of $\SH$. 

If $H_+$ and $H_-$ are not equal at some point on $\SO$, or the scalar curvature is positive at some point in $\Omega$, we fix a leaf $\Sigma_1$ very near $\SH$ and do suitable conformal deformations outside $\Sigma_1$. The resulting metrics are smooth AF metrics on the exterior region outside $\Sigma_1$ with nonnegative scalar curvature, and $\Sigma_1$ is still outer minimizing. Then the ADM mass of $M$ is not less than the Hawking mass of $\SH$ with respect to the resulting metrics, due to the arguments in \cite{HI}. But on the other hand, these conformal deformations strictly decrease the ``energy'' of the exterior region, so the AMD mass of the exterior region with respect to the deformed metrics is strictly less than the initial ADM mass, or the Hawking mass of $\Sigma_1$ with respect to the deformed metrics is strictly greater  then the initial Hawking mass. Thus we get a contradiction. If $(\Omega,g_-)$ is not static, by Theorem 1 in \cite{Cor}, there is a small compact perturbation of $g_-$ denoted by $\bar g$, such that $\bar g$ satiefies the assumptions as $g_-$, but has positive scalar curvature at some point in $\Omega$. Then this falls into the case discussed above. Hence, conclusions (ii) and (iii) hold. 

Note that we also have $\Hm(\SH, g_-)=\Hm(\Sigma_1, g_-)$. Then by the results of \cite{Bray} or \cite{HI}, there exists a collar neighbourhood $U$ near $\SH$ in which $g_-$ is the Schwarzschild metric. Finally, when assuming $\Omega$ is ``star-shaped'', conclusion (iii) follows by a uniqueness continuation argument for static metrics established in \cite{CD} (see also \cite{Biq}).

The remains of the paper run as follows: in Section 2, we construct an isoperimetric foliation near $\SH$ and show each leaf close enough to $\SH$ is outer minimizing with respect to certain perturbed metrics; in Section 3, we establish some estimates for certain conformal deformation equations; in Section 4, we complete the proof of Theorem \ref{rigidity}.

\section{Isoperimetric foliation near $\SH$}

In this section, we construct a foliation near $\SH$, of which all leaves are outer minimizing isoperimetric surfaces in $(M,g)$. As a first step, we construct a CMC foliation near $\SH$ via a similar approach in \cite{ACG}.
\begin{pro}\label{foliation}
Let $(M,g)$ and $\SH$ be as stated in Theorem \ref{rigidity}. Then there exists a stable CMC foliation near $\SH$.
\end{pro}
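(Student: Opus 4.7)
The plan is to produce the foliation by applying the implicit function theorem to the mean-curvature operator acting on normal graphs over $\SH$. Parametrize nearby hypersurfaces by $\Sigma_u = \{\exp_p(u(p)\nu(p)) : p \in \SH\}$ for $u$ small in $C^{2,\alpha}(\SH)$, and define $\mathcal{F}(u,t) = H(\Sigma_u) - t$. Minimality of $\SH$ gives $\mathcal{F}(0,0) = 0$, and the standard first-variation formula for mean curvature yields
\[
D_u \mathcal{F}(0,0)\,v \;=\; -\Delta v - (|A|^2 + \ric(\nu,\nu))\,v \;=\; \mathcal{L} v,
\]
where $\mathcal{L}$ is the stability operator from the excerpt. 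Strict stability of $\SH$ makes the quadratic form $\int_\SH f\,\mathcal{L}f$ positive definite on $W^{1,2}(\SH)$, so $\lambda_1(\mathcal{L}) > 0$ and $\mathcal{L} \colon C^{2,\alpha}(\SH) \to C^{0,\alpha}(\SH)$ is a linear isomorphism. The implicit function theorem then produces a smooth curve $t \mapsto u(t) \in C^{2,\alpha}(\SH)$ on some interval $|t| < \epsilon$, with $u(0) = 0$ and $H(\Sigma_{u(t)}) \equiv t$; each $\Sigma_{u(t)}$ is thus a closed CMC surface near $\SH$.

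Second, I would check that $\{\Sigma_{u(t)}\}$ genuinely foliates a one-sided collar of $\SH$ in $\Omega$, which reduces to strict positivity of the normal velocity $u'(0)$. Differentiating $H(\Sigma_{u(t)}) = t$ at $t = 0$ gives $\mathcal{L} u'(0) = 1$. Because $\lambda_1(\mathcal{L}) > 0$ on the closed surface $\SH$, the operator $\mathcal{L}^{-1}$ is positivity-preserving: writing $u'(0) = \phi_1 w$ with $\phi_1 > 0$ the principal eigenfunction, a short computation shows that at a minimum point of $w$ one has $\lambda_1 \phi_1 w \geq 1$, forcing $u'(0) > 0$ everywhere on $\SH$. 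By continuity $u'(t) > 0$ pointwise for all small $t$, so $(t,p) \mapsto \exp_p(u(t)(p)\nu(p))$ is a diffeomorphism from $[0, \epsilon) \times \SH$ onto a collar of $\SH$ in $\Omega$, and the leaves $\Sigma_{u(t)}$ foliate it.

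Stability of each leaf then follows by perturbation of the principal eigenvalue: the CMC stability operator $\mathcal{L}_t$ associated to $\Sigma_{u(t)}$ depends continuously on $t$ (through the smooth dependence of $|A|^2$ and $\ric(\nu,\nu)$ on the graph), and so does $\lambda_1(\mathcal{L}_t)$. Strict stability of $\SH$ as a minimal surface gives $\lambda_1(\mathcal{L}_0) > 0$ on the full space of test functions, which is strictly stronger than strict CMC stability, since the latter restricts test functions to the mean-zero subspace. After possibly shrinking $\epsilon$, $\lambda_1(\mathcal{L}_t) > 0$ for all $|t| < \epsilon$, so each leaf is strictly stable as a CMC surface.

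The main subtlety, rather than a deep obstacle, is the pointwise positivity of $u'(0)$: ordinary (non-strict) stability of $\SH$ would leave $\mathcal{L}$ invertible only modulo a Jacobi kernel and would not deliver the required sign information, so the strict stability hypothesis enters precisely at this step. Once this positivity is in hand, the remainder of the construction is routine elliptic perturbation theory in the spirit of \cite{ACG}.
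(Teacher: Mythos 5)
Your proof is correct and follows the same overall strategy as the paper: apply the inverse/implicit function theorem to the mean-curvature operator on normal graphs over $\SH$, show pointwise positivity of the normal velocity to guarantee a genuine foliation, and get stability of nearby leaves by continuity of $\lambda_1$. The details differ in two places worth noting. First, the paper does not parametrize directly by the mean curvature $t=H(\Sigma_u)$; instead it introduces the augmented operator $\mathcal{H}^*(u,k)=(\mathcal{H}(u)-k,\int_\SH u)$ and parametrizes by $\tau=\int_\SH u_\tau$, i.e.\ essentially by enclosed volume. Your direct parametrization is valid precisely because strict stability forces $\lambda_1(\mathcal{L})>0$, so $\mathcal{L}$ itself is already an isomorphism $C^{2,\alpha}\to C^{0,\alpha}$; the paper's augmented version is the standard device from \cite{ACG} that also survives degenerate stability, and moreover it hands the authors the volume-type parameter they use immediately afterward (in passing from $\tau$ to $V_\tau$ in Proposition~\ref{foliationminimizinghullsurf}). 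Second, your proof that $u'(0)>0$ uses the eigenfunction-weighted maximum principle ($u'(0)=\phi_1 w$, evaluate $\mathcal{L}(\phi_1 w)=1$ at a minimum of $w$), whereas the paper proves $v\geq 0$ by taking a connected component $\Gamma$ of $\{v<0\}$, truncating, and contradicting strict stability. Both are standard routes to the positivity-preserving property of $\mathcal{L}^{-1}$ and both are correct; yours is perhaps a touch cleaner, while the paper's has the minor advantage of not invoking the principal eigenfunction. In short: same skeleton, different choice of parameter and of positivity lemma, and each choice is serviceable here.
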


\begin{proof}
In order to construct a CMC surface near $\SH$, we need to extend $\Omega$ to a larger domain. More precisely, we glue $\Omega$ with $(-1, 0]\times \SH $ along $\SH$ in an obvious way. The resulting domain is denoted by $\hat \Omega$. Let $\mathcal{H}(u)$ denote the mean curvature of the hypersurface $\Sigma_u$, which is defined as following
\[
\Sigma_u:x\rightarrow \exp_x\left[u(x)\nu\right],\ u\in C^{\infty}(\SH),
\]
for small enough $u$. Then we see that $\Sigma_u \subset \hat \Omega$ for sufficiently small $u$ and it is contained in $\Omega$ if $u> 0$. We are going to show that there is a family of stable CMC surfaces $\Sigma_u$ with small positive $u$, hence $\Sigma_u \subset \Omega$. By a direct computation, we see that the linearization of $\mathcal{H}$ at $u \equiv0$ is $\mathcal{H}'(0)=\mathcal{L}$.
 We introduce the operator $\mathcal{H}^*$ defined by
\[
\mathcal{H}^*:C^{2, \beta}(\SH)\times\mathbb{R}\rightarrow C^{\beta}(\SH)\times\mathbb{R},
\ \ \mathcal{H}^*(u, k)=\Big(\mathcal{H}(u)-k, \int_{\SH}u\Big),
\]
for some $0<\beta<1$. Suppose that $\mathcal{L}v=s$ for some constant $s$ with $\int_{\SH}v=0$. Then by stability assumption of $\SH$ we see that
$v\equiv0, s=0$. Thus, $\mathcal{H}^*$ has a invertible linearization at $(0,0)$.
Then by the inverse function theorem, for each $\tau$ sufficiently small there exist $u_{\tau}$ and
$k_{\tau}$ depending on $\tau$ smoothly with
\begin{equation}\label{uk}
\mathcal{H}(u_{\tau})=k_{\tau},\ \ \int_{\SH}u_{\tau}=\tau.
\end{equation}
Set $$v(x)=\frac{\partial u_{\tau}(x)}{\partial \tau}\Big |_{\tau=0}, \quad s=\frac{\partial k_{\tau}}{\partial\tau}\Big|_{\tau=0}.$$
We must have
\begin{equation}\label{derivative}
 v(x)>0\,\,\,\,\mbox{and}\,\,\,\,s>0.
\end{equation}
Indeed, a differentiation of $(\ref{uk})$ yields
\[
\mathcal{L}v=s, \ \ \int_{\SH}v=1.
\]
It follows that
\[
s=\int_{\SH}sv=\int_{\SH}v\mathcal{L}v>0.
\]
To show that $v>0$, by the equation $\mathcal{L}v=s>0$ and the maximum principle, it suffices to prove that $v\geq 0$.
If this fails, then we can choose a nonempty connected component $\Gamma$ of the open set $\{x\in\SH|\,v(x)<0\}$.
Set
\begin{equation}
f(x)=\left\{
\begin{aligned}
&v(x) \quad\mbox{for}\,\,x\in\Gamma;\\
&\,\,\,0 \quad\,\,\,\,\,\mbox{for}\,\,x\in\SH\backslash\Gamma.
\end{aligned}
\right.\nonumber
\end{equation}
Then it follows that
\[
0>\int_{\Gamma}sf=\int_{\Gamma}f\mathcal{L}f=\int_{\SH}f\mathcal{L}f,
\]
which contradicts the stability condition.

Note that $u(0)=0$. Then from \eqref{derivative} we see that $u_\tau(x)>0$ and $u_{\tau}(x)$ is strictly increasing with respect to $\tau$ for sufficiently small $\tau$ and all $x\in\SH$. Thus,  the constant mean curvature surfaces $\Sigma_{u_{\tau}}$ are contained in $\Omega$ and disjoint from each other. Since $\SH$ is strictly stable, we see that $\Sigma_{u_{\tau}}$  is also stable for sufficiently small $\tau$.

Finally, we show that for any $p$ near $\SH$ in $\Omega$, there is some $\Sigma_{u_{\tau}}$ passing through. Suppose not, then there exists a largest $\tau_1$ so that $p$ is not in the closure of the domain enclosed by $\Sigma_{u_{\tau_1}}$ and $\SH$, and also exists a smallest $\tau_2$ so that $p$ is in the closure of the domain enclosed by $\Sigma_{u_{\tau_2}}$ and $\SH$, and $\tau_2 > \tau_1$. While this is impossible as $u_\tau$ is strictly increasing with $\tau$. Thus, for  small $\tau$, $\Sigma_{u_{\tau}}$ form a foliation in a collar neighborhood of $\SH$.
\end{proof}

It's not hard to see that the volume of the domain enclosed by $\Sigma_{u_{\tau}}$ and $\SH$ is
given by
\[
V_{\tau}=\int_{\SH}u_{\tau}+O\left(\tau^2\right).
\]
Then we have
\[\frac{\partial V_{\tau}}{\partial \tau}\Big|_{\tau=0}=
\int_{\SH}v>0.
\]
Thus, the foliation obtained in Proposition \ref{foliation}  can be written as $\{\Sigma_V\}_{0\leq V\leq V_0}$ such that
$\Sigma_V$ smoothly depends on $V$, which is the volume of the domain enclosed by $\Sigma_V$ and $\SH$. Next we prove:

\begin{pro}\label{foliationminimizinghullsurf}
There exists some $V_1\in (0,V_0]$ such that each leaf of $\{\Sigma_V\}_{0\leq V\leq V_1}$ is the isoperimetric surface for volume $V$, and is outer minimizing.
\end{pro}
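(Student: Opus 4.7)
The strategy is to use the CMC foliation of Proposition~\ref{foliation} as a calibration, together with a compactness argument to handle competitors that leave the foliated region. Writing $H_V$ for the constant mean curvature of $\Sigma_V$, the proof of Proposition~\ref{foliation} (via $k_\tau=s\tau+O(\tau^2)$ with $s>0$ and $V_\tau=\tau+O(\tau^2)$) shows $H_0=0$, $H_V>0$ for small $V>0$, and $V\mapsto H_V$ strictly increasing on $(0,V_1]$ after shrinking $V_1\leq V_0$. Let $\nu$ be the unit vector field on the foliated region that restricts to the outward unit normal of each leaf. Then $\operatorname{div}\nu=H_\phi$ at a point lying on $\Sigma_\phi$, so $\operatorname{div}\nu\geq 0$ throughout.

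I would first establish both properties against competitors $\tilde\Sigma$ contained in the foliated region. Let $E_V$ and $\tilde E$ denote the regions enclosed with $\SH$ by $\Sigma_V$ and $\tilde\Sigma$ respectively. Applying the divergence theorem to $\nu$ on $E_V$ and $\tilde E$ and subtracting gives
\begin{equation*}
\int_{\tilde\Sigma}\langle\nu,\tilde\nu\rangle\,d\sigma - |\Sigma_V| \;=\; \int_{\tilde E}H_\phi - \int_{E_V}H_\phi \;=\; \int_{A}H_\phi - \int_{B}H_\phi,
\end{equation*}
where $A=\tilde E\setminus E_V$ and $B=E_V\setminus\tilde E$. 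If $\tilde\Sigma$ encloses $\Sigma_V$, then $B=\emptyset$ and the right-hand side is $\geq 0$, yielding $|\tilde\Sigma|\geq|\Sigma_V|$. If instead $|\tilde E|=V$, then $|A|=|B|$; strict monotonicity of $H$ gives $H_\phi>H_V$ on $A$ and $H_\phi<H_V$ on $B$, so $\int_A H_\phi - \int_B H_\phi > H_V(|A|-|B|)=0$, producing the strict isoperimetric inequality $|\tilde\Sigma|>|\Sigma_V|$ unless $\tilde\Sigma=\Sigma_V$.

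Finally I would promote these local statements to arbitrary competitors in $M$. Given any $\tilde\Sigma$ enclosing $\Sigma_V$ with $|\tilde\Sigma|<|\Sigma_V|$ (or any candidate isoperimetric competitor of enclosed volume $V$), replace it by its minimizing hull (resp.\ an area-minimizer for volume~$V$) $\tilde\Sigma^\ast$, which exists as a $C^{1,1}$ surface in $M$, encloses $\SH$, and satisfies $|\tilde\Sigma^\ast|\leq|\tilde\Sigma|$. Since $|\Sigma_V|\to|\SH|$ as $V\to 0$, the area of $\tilde\Sigma^\ast$ is forced arbitrarily close to $|\SH|$. Using that $\SH$ is strictly outer minimizing, a standard compactness argument for finite-perimeter sets of bounded area enclosing $\SH$ forces $\tilde\Sigma^\ast$ to collapse onto $\SH$ uniformly as $V_1\to 0$, so for $V_1$ small enough $\tilde\Sigma^\ast$ lies in the foliated region. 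Applying the calibration of the previous paragraph to $\tilde\Sigma^\ast$ then contradicts $|\tilde\Sigma^\ast|<|\Sigma_V|$, completing the proof. The hard part will be making this compactness/collapse step precise and verifying that it works for both the outer-minimizing and the isoperimetric variants; this is where the strict outer-minimizing hypothesis on $\SH$ enters crucially, and one presumably invokes Lemma~\ref{minimizing hull} (foreshadowed in the introduction) to control the minimizing hulls of nearby surfaces.
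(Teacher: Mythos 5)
Your calibration argument for competitors entirely contained in the foliated collar is correct and is a genuinely different route from the paper's: for the isoperimetric half, the paper simply cites Ros's theorem on small isoperimetric regions to get existence and localization of the minimizer near $\SH$, then identifies it with $\Sigma_V$ by a maximum-principle/uniqueness argument for CMC graphs, whereas your vector-field calibration proves both the isoperimetric and outer-minimizing inequalities simultaneously, self-containedly, and with a clean strict inequality from the monotonicity of $H_V$. That part is a nice improvement in transparency.

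The genuine gap is in the reduction to competitors inside the collar. You write that a ``standard compactness argument for finite-perimeter sets of bounded area enclosing $\SH$ forces $\tilde\Sigma^\ast$ to collapse onto $\SH$ uniformly,'' but that is precisely the step that is not standard: $L^1$ or weak current convergence of $\tilde\Sigma^\ast$ to $\SH$ (which is all that strict outer-minimizing of $\SH$ gives you directly) does \emph{not} imply Hausdorff collapse; thin, low-area fingers escaping the collar are not excluded by mass convergence alone. The paper fills this exact hole with three ingredients you do not supply: (a) the portion of the competitor outside the collar is a smooth minimal surface, by the structure theory of minimizing hulls/free boundary minimizers in \cite{HI}; (b) the maximum principle against the interior leaf $\Sigma_{V_2}$, which has $H>0$, pushes that minimal piece a definite distance $\epsilon_0$ into $W_+$; and (c) the Meeks--Yau monotonicity/area estimate in \cite{MY} then gives a uniform lower bound $\Area(S_{V_i}\cap W_+)\ge\epsilon_1>0$, which persists in the current limit and contradicts $S_\infty=\SH$. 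Without this density estimate the collapse step does not close. Also, your suggestion to invoke Lemma~\ref{minimizing hull} here would be circular in the paper's logic: that lemma is proved in Section~3, \emph{after} and in reliance on the foliation of Section~2, and concerns outer-minimality under perturbed metrics, not the present statement; the paper's proof of Proposition~\ref{foliationminimizinghullsurf} is deliberately self-contained within Section~2 (apart from the external inputs \cite{Ros}, \cite{HI}, \cite{MY}). To repair your proof, replace the vague compactness step with the minimality-plus-Meeks--Yau argument above, or, for the isoperimetric half, simply cite Ros's localization theorem as the paper does.
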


\begin{proof}
For convenience, we use the convention that all surfaces mentioned in this proof enclose $\SH$.
We use $U_0$ to denote the region enclosed by $\SH$ and $\Sigma_{V_0}$. By the result in \cite{Ros}, there exists some positive $\bar V\leq V_0$ such that for any $V\leq\bar V$, there exists an isoperimetric surface $\bar\Sigma_V$ enclosing a region of volume $V$ with $\SH$, which is also contained in $\Omega$ and close to $\SH$. Note that each $\bar\Sigma_V$ has constant mean curvature. By the maximum principle, any surface $S\subset U_0$ with the same constant mean curvature must coincide with some $\Sigma_V$. It follows that $\bar\Sigma_V=\Sigma_V$ for any $V\in [0,\bar V]$. Thus we prove the first part of Proposition \ref{foliationminimizinghullsurf}. 

For the second part, we take a contradiction argument. It is not hard to see from \eqref{derivative} that $\Area(\Sigma_V)$ is strictly monotonically increasing in $[0,V_2]$ for some $0<V_2\leq \bar V$. $\Sigma_{V_2}$ divides $\Omega$ into two parts. We denote the part bounded by $\SH$ and $\Sigma_{V_2}$ by $W_-$, the other by $W_+$. If the proposition is not true, then there exist $V_i\rightarrow 0$ and a sequence of surfaces $S_{V_i}$ such that each $S_{V_i}$ encloses $\Sigma_{V_i}$ with $\Area(S_{V_i})<\Area(\Sigma_{V_i})$. We must have $S_{V_i}\cap W_+\neq \emptyset$. Otherwise, the volume of the region enclosed by $S_{V_i}$ and $\SH$, which is denoted by $\tilde V_i$, is not greater than $V_2$. Since $\Sigma_{\tilde V_i}$ is the isoperimetric surface for volume $\tilde V_i$, $\Area(S_{V_i})\geq\Area({\Sigma_{\tilde V_i}})$. Because $S_{V_i}$ encloses $\Sigma_{V_i}$, $\tilde V_i\geq V_i$. By the monotonicity, we have $\Area(\Sigma_{\tilde V_i})\geq\Area(\Sigma_{V_i})$. Thus we have $\Area(S_{V_i})\geq\Area(\Sigma_{V_i})$, which contradicts with our assumption. Hence, $S_{V_i}\cap W_+\neq \emptyset$. By the result in \cite{HI}, each $S_{V_i}\cap W_+$ is a minimal surface. Note that $H(\Sigma_{V_2})>0$. By the maximum principle, there is a positive constant $\epsilon_0$ such that $\sup_{y\in S_{V_i}\cap W_+}\dist(y,\Sigma_{V_2})>\epsilon_0$ for all $i$. It is not hard to see that the volume of the region enclosed by $\SH$ and $S_{V_i}$ is uniformly bounded. Then in the sense of current (see Theorem 32.2 in \cite{Sim}), $S_{V_i}$  weakly converge to $S_{\infty}$,  which is an integer multiplicity current, encloses $\SH$ and has $2$-dimensional Hausdorff measure not larger than the area of $\SH$. Since $\SH$ is strictly outer minimizing, $S_{\infty}$ should coincide with $\SH$ except a zero measure set. On the other hand, by the estimate in \cite{MY}, we have $\Area(S_{V_i}\cap W_+)>\epsilon_1$ for a certain positive constant $\epsilon_1$ independent of $i$. As a consequence, we also have  $\Area(S_{\infty}\cap W_+)\geq\epsilon_1$. Thus we reach the contradiction. Hence, any leaf of $\{\Sigma_V\}_{0\leq V\leq V_1}$ should be outer minimizing.
\end{proof}
We let $\mathcal{F}$ denote this foliation, $W_0$ the corresponding region. By the result in \cite{Bray}, the Hawking mass of $\Sigma_t$ is monotonically increasing for $t\in[0,V_1]$.
\section{Some estimates for conformal deformation equations}
In this section, we establish some estimates for certain conformal deformation equations. 

Set $\Sigma_-^s=\{\exp_x(-s\nu_{g_-}), x\in\SO\}$, $\Sigma_+^s=\{\exp_x(s\nu_{g_+}), x\in\SO\}$, where $s\in(0,s_0]$ and $s_0$ is a fixed positive constant.
In \cite{Miao1}, Miao constructed a family of $C^2$ metrics $\{g_{\delta}\}_{0<\delta\leq\delta_0}$
on $M$ with
\begin{equation}
g_{\delta}=\left\{
\begin{aligned}
dt^2+\sigma_{\delta ij}(x,t)dx^idx^j,\ \ (x,t)\in \SO\times(-s_0, s_0)\\
g,\ \ \ \ \ \ \ \ \ \ \ \ \ \ \ \ \ \ \ (x,t)\notin \SO\times(-s_0,s_0)
\end{aligned}
\right.
\end{equation}

Let $\phi(t)\in C^{\infty}_c([-1, 1])$ be a standard mollifier such that
\[
0\leq \phi\leq 1,\ \ \phi\equiv 1 \ \  \text{in} \ \ [-\frac{1}{3},\frac{1}{3}] \ \ \text{and}\ \ \int_{-1}^1\phi(t)dt=1.
\]
The properties of $\{g_{\delta}\}_{0<\delta\leq\delta_0}$ is given in  the following proposition which can be found in \cite{Miao1}.
\begin{pro}\label{smoothing the metric g}
Let $g=(g_{-},g_{+})$ be a metric admitting corners along $\SO$.
Then there exists a family of $C^2$ metrics $\{g_{\delta}\}_{0<\delta\leq\delta_0}$ on $M$
 so that $g_{\delta}$ is uniformly close to $g$ on $M$, $g_{\delta}=g$
  outside $\SO\times(-\frac{\delta}{2}, \frac{\delta}{2})$
   and the scalar curvature of $g_{\delta}$ satisfies
\begin{align}
R_{\delta}(x, t)=& O(1),
\  \text{for}\  (x, t)\in \SO\times\left\{\frac{\delta^2}{100}<t\leq\frac{\delta}{2}\right\}\label{RO(1))}\\
R_{\delta}(x, t)= &O(1)+\left\{H(\SO,g_{-})(x)-H(\SO,g_{+})(x)\right\}
\left\{\frac{100}{\delta^2}\phi\left(\frac{100t}{\delta^2}\right)\right\},\nonumber\\
&\text{for}\ (x, t)\in \SO\times\left[-\frac{\delta^2}{100}, \frac{\delta^2}{100}\right]\label{RHH}
\end{align}
where $O(1)$ represents quantities that are bounded by constants depending
only on $\mathcal{G}$, but not on $\delta$.
\end{pro}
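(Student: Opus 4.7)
The plan is to work in a Gaussian (Fermi) tubular neighborhood of $\SO$ in which the corner structure is encoded entirely by a jump in the $t$-derivative of the tangential metric, and then to mollify only this tangential part in the $t$-direction at a carefully chosen scale. Concretely, on $\SO \times (-s_0, s_0)$ both pieces of $g$ can be written as $dt^2 + \sigma(x, t)$ with $\sigma$ being $C^2$ for $t > 0$ and for $t < 0$ separately, and with matching values at $t=0$, so $\sigma$ is Lipschitz in $t$. The one-sided $t$-derivatives $\partial_t \sigma|_{t=0^\pm}$ are twice the second fundamental forms of $\SO$ from each side, and their $\sigma$-traces recover (up to a sign) the mean curvatures $H_\pm$.

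Given the mollifier $\phi$, set $\phi_\delta(t) = \frac{100}{\delta^2} \phi\bigl(\frac{100 t}{\delta^2}\bigr)$, which is supported in $[-\delta^2/100,\delta^2/100]$ and integrates to one. Define
\[
\sigma_\delta(x, t) = \int \sigma(x, t - s) \phi_\delta(s) \, ds \quad \text{for } |t| \leq \tfrac{\delta^2}{100},
\]
and on the intermediate annulus $\tfrac{\delta^2}{100} < |t| \leq \tfrac{\delta}{2}$ interpolate between $\sigma_\delta$ and $\sigma$ by a smooth cut-off $\chi(t)$ chosen so that the glued field is $C^2$; outside $|t| \geq \delta/2$ set $g_\delta = g$. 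Standard mollification estimates yield $\sigma_\delta \to \sigma$ uniformly in $C^0$ as $\delta \to 0$, so $g_\delta$ is uniformly close to $g$ and $g_\delta = g$ outside the slab, as required.

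For the scalar curvature I would use the Fermi-coordinate decomposition $R = R^{\Sigma_t} - |A|^2 - H^2 - 2 \partial_t H$, where $A_{ij} = \tfrac{1}{2} \partial_t \sigma_{ij}$ and $H = \sigma^{ij} A_{ij}$. The first three terms are bounded functions of $\sigma_\delta$ together with its $\partial_x^2$ and $\partial_t$ derivatives, and they remain $O(1)$ in the entire slab because the convolution is only in $t$. The only potentially large contribution is $-2\partial_t H$: since $\partial_t \sigma$ has a distributional jump at $t=0$ whose trace is proportional to $-(H_- - H_+)$, commuting $\partial_t$ with the mollifier on $|t|\leq \delta^2/100$ produces exactly the prescribed term $(H_-(x) - H_+(x))\phi_\delta(t)$ in \eqref{RHH}, with the remainder $O(1)$ because $\sigma$ is $C^2$ on each side of $t=0$.

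The main obstacle is the gluing on the intermediate annulus: one must keep the metric $C^2$ while forcing $R_\delta = O(1)$ there, even though a cut-off $\chi$ adapted to a window of width $\delta/2$ typically introduces derivatives of size $1/\delta$. This succeeds because $\phi_\delta$ is concentrated at scale $\delta^2/100 \ll \delta/2$; on the annulus, a Taylor expansion using that $\sigma$ is $C^2$ there gives $\sigma - \sigma_\delta = O(\delta^4)$ and $\partial_t(\sigma - \sigma_\delta) = O(\delta^2)$, so the cut-off contributions $\chi' (\sigma - \sigma_\delta)$ and $\chi''(\sigma - \sigma_\delta)$ remain $O(1)$, yielding \eqref{RO(1))}. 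Since the proposition is quoted from \cite{Miao1}, I would then simply follow Miao's two-scale construction with the inner scale $\delta^2/100$ and the outer scale $\delta/2$ prescribed by the statement.
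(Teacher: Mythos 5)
The paper does not prove this proposition; it quotes it directly from Miao's work (``which can be found in \cite{Miao1}''). Your proposal is a faithful reconstruction of Miao's argument in \cite{Miao1} and is essentially correct: you write $g$ in Fermi coordinates as $dt^2+\sigma(x,t)$ (which reduces the corner to a jump in $\partial_t\sigma$ across $t=0$, since the induced metrics agree and $t$ remains the signed distance after any modification of the tangential part alone), mollify $\sigma$ in $t$ at the inner scale $\delta^2/100$, cut off at the outer scale $\delta/2$, and read off the scalar curvature from the radial decomposition $R=R^{\Sigma_t}-|A|^2-H^2-2\partial_t H$, $A=\tfrac12\partial_t\sigma$, $H=\operatorname{tr}_\sigma A$. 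The key points are all correctly identified: the only unbounded contribution comes from $\partial_t H$ picking up the jump $[\partial_t\sigma]\,\phi_\delta(t)$ under mollification, which yields the prescribed $(H_--H_+)\phi_\delta(t)$ term on $|t|\le\delta^2/100$; and the two-scale choice $\delta^2\ll\delta$ is exactly what makes the cutoff harmless on the annulus, since $\sigma-\sigma_\delta=O(\delta^4)$ and $\partial_t(\sigma-\sigma_\delta)=O(\delta^2)$ there so that $\chi',\chi''\sim\delta^{-1},\delta^{-2}$ produce only $O(1)$ in $R_\delta$. The only minor quibble is the loose phrase ``produces exactly the prescribed term'': if you track the constants through $[\partial_t\sigma]=2(A_+-A_-)$ and $R=\cdots-2\partial_t H$, the jump term actually appears with a numerical factor, so one should either absorb it into the definition of $\phi_\delta$ or check that the convention for $H$ used in the statement makes the factor come out to $1$; but since this is inherited from the statement as quoted and does not affect the structure of the estimate, it is not a real gap.
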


Fix some $\Sigma_1\in\mathcal{F}$ that is close to $\SH$ and denote the region outside of $\Sigma_1$ by $M_1$. As in \cite{Miao1}, we need to do a conformal deformation to get a smooth asymptotically flat metric with nonnegative  scalar curvature. Set $R_{\delta-}=\max\{-R_\delta, 0\}$. Consider the following equation
\begin{equation}\label{conformal equation1}
\left\{
\begin{aligned}
\Delta_{g_{\delta}}u_{\delta}+\frac{1}{8}R_{\delta-}u_{\delta}&=0\ \ \,\text{in}\ M_1\\
u_{\delta}&=1\ \ \, \text{on}\ \Sigma_1\\
u_\delta&\rightarrow 1 \ \ \text{at}\  \infty.
\end{aligned}
\right.
\end{equation}

Due to \eqref{RO(1))} and \eqref{RHH}, for sufficiently small $\delta$, equation \eqref{conformal equation1} has a $C^2$ positive solution $u_{\delta}$. The maximum principle implies that $ u_{\delta}\geq 1$.
Then by Proposition 4.1 in \cite{Miao1}, we have
$u_{\delta}\rightarrow 1$ in the $C^2_{loc}$ sense in $M_1\setminus\SO$.
The resulting conformal metrics are denoted by $\tilde{g}_{\delta}=u_{\delta}^4g_{\delta}$.

For further use, we focus on the following two cases.

\textbf{Case 1.} $H_{-}(p)-H_{+}(p)>0\ \ \text{for some}\  p\in\SO.$

In this case, by (\ref{RHH}), the scalar curvature of $\tilde{g}_{\delta}$ is positive in a small neighbourhood of $p$.
We want to use another conformal deformation to decrease the scalar curvature of $\tilde{g}_{\delta}$ a bit.
Set
\[
B_r(p)=\{x\in M:d_g(x,p)<r\}.
\]
We define a two-parameters family of functions by $\psi_{\delta,r}=\eta_rR(\tilde g_{\delta})$ with
 $\eta_r\in C^\infty_0(B_{r}(p))$ being a family of cutoff functions satisfying:
\begin{equation}
\eta_r(x)=\left\{
\begin{aligned}
&\frac{1}{8}, \quad\mbox{for}\,\,x\in B_{\frac{r}{2}};\\
&0, \quad\mbox{for}\,\,x\in B_{r}(p)\setminus B_{\frac{3r}{4}};\\
&0\leq \eta_r(x)\leq \frac{1}{8},\quad\mbox{for}\,\, x\in B_{\frac{3r}{4}}\setminus B_{\frac{r}{2}}(p).
\end{aligned}
\right.\nonumber
\end{equation}
Then
for any $\delta, r$ with $\delta\leq r\ll 1$, by Proposition \ref{smoothing the metric g}, we have
\begin{equation}\label{integral estimate}
 \varepsilon_0 r^2\leq \int_{M}\psi_{\delta,r}dV_{\tilde{g}_{\delta}}\leq \frac{1}{\varepsilon_0}r^2,
\end{equation}
where $\varepsilon_0$ is a constant depending only on $g$.

Now we consider the following equation
\begin{equation}\label{conformal equation2}
\left\{
\begin{aligned}
{\Delta}_{\tilde{g}_{\delta}}v_{\delta,r}-\psi_{\delta,r}v_{\delta,r}&=0\ \ \, \text{in}\ M_1\\
v_{\delta,r}&=1\ \ \, \text{on}\ \Sigma_1\\
v_{\delta,r}&\rightarrow 1\ \ \text{at}\ \infty.
\end{aligned}
\right.
\end{equation}
By the maximum principle, $0<v_{\delta,r}\leq 1$ in $M$.
Then for any  $\delta$ small enough, we have
\begin{align}
\int_{M}\left|\nabla_{\tilde g_\delta}v_{\delta,r}\right|^2dV_{\tilde{g}_{\delta}}
=&\int_{M}\psi_{i,j}v_{\delta,r}(1-v_{\delta,r})dV_{\tilde{g}_{\delta}}\nonumber\\
\leq&\frac{1}{\varepsilon_0}r^2.\label{L2 estimate}
\end{align}
On the other hand, given $K\subset\subset \bar{M}_1\setminus \SO$, by the standard Schauder estimates for linear elliptic equations,
 for any $\delta\leq r\ll 1$ small enough, we have
\begin{equation}\label{C2 estimate}
\| v_{\delta,r}\|_{C^2(K)}\leq C_K,
\end{equation}
where the norm $\|\cdot\|$ is taken with respect to $\tilde{g}_{\delta}$, and
$C_K$ is a constant depending only on $K$ and the initial metric $g$.

Set $\hat{g}_{\delta,r}=v_{\delta,r}^4\tilde{g}_{\delta}$.
Let $\Sigma_2$ and $\Sigma_3$ be two leaves in $\mathcal{F}$ so that $\Sigma_3$ encloses $\Sigma_2$ and $\Sigma_2$ encloses $\Sigma_1$.
In the rest of the paper, we always use $\Omega_i$ to denote the domain enclosed by $\Sigma_i$ and $\Sigma_1$, for $i= 2, 3$.
We have the following
\begin{lm}\label{deform1}
There exists some $r_0\ll1$ such for any $\delta, r$ with $\delta\leq r\leq r_0$,
it holds
$$H\left(\Sigma', \hat{g}_{\delta,r}\right)>0,
$$
where $\Sigma'$ is any leaf  in $\bar{\Omega}_3$.
\end{lm}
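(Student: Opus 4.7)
I would track the mean curvature of a fixed leaf $\Sigma'\subset\bar{\Omega}_3$ through the two conformal changes $g_\delta\rightsquigarrow\tilde g_\delta=u_\delta^4 g_\delta\rightsquigarrow\hat g_{\delta,r}=v_{\delta,r}^4\tilde g_\delta$ using the three-dimensional transformation law
\[
H(\Sigma',\varphi^4 g)=\varphi^{-2}H(\Sigma',g)+4\varphi^{-3}\partial_\nu\varphi,
\]
and show that it stays uniformly positive, provided $\Sigma_3$ and later $r_0$ are chosen small enough.

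First I would fix $\Sigma_3$ close enough to $\SH$ so that $\bar{\Omega}_3\subset\Omega$ is separated from the corner slab $\SO\times(-s_0,s_0)$ by a fixed positive distance $d_0$; then $g_\delta\equiv g=g_-$ on a neighbourhood of $\bar{\Omega}_3$ for every small $\delta$. Because $k_0=0$ while $s=\partial_\tau k_\tau|_{\tau=0}>0$ by \eqref{derivative}, the CMC values of the leaves vary monotonically away from $0$, so after shrinking $\Sigma_3$ further if needed there is a constant $c_0>0$ with $H(\Sigma',g)\geq c_0$ for every leaf $\Sigma'\subset\bar{\Omega}_3$. Applying the transformation law with $\varphi=u_\delta$ and invoking Proposition 4.1 of \cite{Miao1} (i.e.\ $u_\delta\to 1$ in $C^2_{\mathrm{loc}}(M_1\setminus\SO)$), the $C^1$-smallness of $u_\delta-1$ on $\bar{\Omega}_3$ yields $H(\Sigma',\tilde g_\delta)\geq c_0/2$ for all sufficiently small $\delta$.

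The main content is an analogous $C^1$-smallness of $v_{\delta,r}-1$ on $\bar{\Omega}_3$, uniform in $\delta\leq r$. Set $w=1-v_{\delta,r}\geq 0$; from \eqref{conformal equation2} one has $\Delta_{\tilde g_\delta}w=-\psi_{\delta,r}v_{\delta,r}\leq 0$, $w=0$ on $\Sigma_1$, $w\to 0$ at infinity, hence
\[
w(x)=\int_{M_1}G_{\tilde g_\delta}(x,y)\,\psi_{\delta,r}(y)\,v_{\delta,r}(y)\,dV_{\tilde g_\delta}(y),
\]
where $G_{\tilde g_\delta}$ is the Dirichlet Green's function on $(M_1,\tilde g_\delta)$. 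Since $\mathrm{supp}\,\psi_{\delta,r}\subset B_r(p)$ lies inside the slab $\SO\times(-s_0,s_0)$, so that $\dist(\bar{\Omega}_3,\mathrm{supp}\,\psi_{\delta,r})\geq d_0/2$ once $r$ is small, the standard $G\leq C/\dist$ bound on asymptotically flat $3$-manifolds combined with \eqref{integral estimate} gives $\sup_{\bar{\Omega}_3}|w|\leq C_1 r^2$. Because $v_{\delta,r}$ is $\tilde g_\delta$-harmonic on a neighbourhood of $\bar{\Omega}_3$, interior gradient estimates for harmonic functions upgrade this to $\|v_{\delta,r}-1\|_{C^1(\bar{\Omega}_3)}\leq C_2 r^2$. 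Plugging $\varphi=v_{\delta,r}$ into the transformation law then yields $H(\Sigma',\hat g_{\delta,r})\geq c_0/4>0$ for all $\delta\leq r\leq r_0$ once $r_0$ is chosen small.

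The main obstacle is uniformity in $\delta$: the Green's function bound, the interior gradient estimate, and the positivity of $H(\Sigma',g_\delta)$ must all be independent of $\delta$. I would handle this by noting that on every fixed compact $K\subset M_1\setminus\SO$ the metrics $\tilde g_\delta$ converge to $g$ in $C^2(K)$ with uniformly controlled ellipticity constants, so the classical estimates transfer with $\delta$-independent constants; the separation of $\bar{\Omega}_3$ from the corner slab secured in the first step is precisely what makes this uniformity available.
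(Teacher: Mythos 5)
Your overall strategy coincides with the paper's: both proofs reduce the lemma to a uniform $C^1$-smallness of $v_{\delta,r}-1$ on $\bar\Omega_3$, and then conclude by the same conformal transformation law applied twice, using $u_\delta\to1$ in $C^2_{loc}$ away from $\SO$ and the positive lower bound for the mean curvature of the leaves between $\Sigma_1$ and $\Sigma_3$. Where you differ is in how that smallness is obtained. The paper simply combines the two estimates it has already established in this section: the global energy bound \eqref{L2 estimate}, $\int_M|\nabla_{\tilde g_\delta}v_{\delta,r}|^2\,dV_{\tilde g_\delta}\leq r^2/\varepsilon_0$, and the uniform bound \eqref{C2 estimate} on compact subsets of $\bar M_1\setminus\SO$ (which is stated up to the inner boundary $\Sigma_1$), and interpolates, using $v_{\delta,r}\equiv1$ on $\Sigma_1$, to get $\|v_{\delta,r}-1\|_{C^1(\bar\Omega_3)}<H_1/10$. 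You instead represent $1-v_{\delta,r}$ by the Dirichlet Green's function and use \eqref{integral estimate} together with the separation of $\bar\Omega_3$ from $\mathrm{supp}\,\psi_{\delta,r}$; this is a legitimate alternative and even gives the quantitative rate $O(r^2)$, but it carries two burdens the paper's route avoids. First, the bound $G_{\tilde g_\delta}(x,y)\leq C/\dist(x,y)$ with a $\delta$-independent constant is a \emph{global} statement (it rests, e.g., on a uniform Sobolev constant for $(M_1,\tilde g_\delta)$), so the justification you offer — $C^2$-convergence of $\tilde g_\delta$ on compacta away from $\SO$ — is not quite the right uniformity; what saves you is that $g_\delta$ is uniformly close to $g$ and $u_\delta$ is uniformly bounded, which does give a uniform Sobolev constant, but this should be said. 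Second, "interior gradient estimates for harmonic functions" do not apply at $\Sigma_1$ itself, and the lemma concerns leaves up to and including $\Sigma_1$, where $\partial_\nu v_{\delta,r}$ enters the mean curvature; since $v_{\delta,r}$ is harmonic only on one side with constant boundary value $1$, you need boundary Schauder (or boundary gradient) estimates there — a standard fix, and indeed the role played in the paper by \eqref{C2 estimate} on compact subsets of $\bar M_1\setminus\SO$ containing $\Sigma_1$. With these two adjustments your argument is complete.
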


\begin{proof}
Denote by $H_1$ the mean curvature of $\Sigma_1$ with respect to $g_-$. Note that $v_{\delta, r}\equiv1$ on $\Sigma_1$. Combining (\ref{L2 estimate}) and (\ref{C2 estimate}) we see that
 there exists some $r_0 \ll1 $ such that for any $\delta\leq r\leq r_0$,
\begin{equation}\label{C1 estimate}
\|v_{\delta,r}-1\|_{C^1(\bar\Omega_3)}<\frac{H_1}{10}.
\end{equation}
Let $\nu$ be the unit out normal vector filed on $\Sigma'$ with respect to $g_-$.
 Then a direct computation shows
\begin{align*}
H\left(\Sigma', \hat{g}_{\delta,r}\right)
=&v_{\delta,r}^{-2}H\left(\Sigma',\tilde{g}_{\delta}\right)+4u_{\delta}^{-2}v_{\delta,r}^{-3}\frac{\partial v_{\delta,r}}{\partial\nu}\nonumber\\
=&v_{\delta,r}^{-2}\left(u_{\delta}^{-2}H\left(\Sigma',{g}_{\delta}\right)+4u_{\delta}^{-3}\frac{\partial u_{\delta}}{\partial\nu}\right)+4v_{\delta,r}^{-3}\frac{\partial v_{\delta,r}}{\partial\nu}\nonumber\\
>&0\label{H>0}.
\end{align*}\end{proof}
Now we are ready to prove
\begin{lm}\label{minimizing hull}
Let $\Sigma_1$ be a leaf in $\mathcal{F}$ which is close enough to $\SH$.
 Then there exist some $0<\varepsilon_1<r_0$ such that for any $\delta, r$ with $\delta\leq r\leq \varepsilon_1$,
 $\Sigma_1$ is still outer minimizing in $(M_1, \hat{g}_{\delta, r})$.
\end{lm}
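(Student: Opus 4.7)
The plan is to argue by contradiction along the lines of the second part of the proof of Proposition \ref{foliationminimizinghullsurf}. Suppose the conclusion fails; then there exist sequences $\delta_i \le r_i$ with $r_i \to 0$ and hypersurfaces $\tilde\Sigma_i \ne \Sigma_1$ enclosing $\Sigma_1$ in $M_1$ with
\[
\Area(\tilde\Sigma_i, \hat g_{\delta_i, r_i}) < \Area(\Sigma_1, \hat g_{\delta_i, r_i}).
\]
Replacing each $\tilde\Sigma_i$ by a minimizing hull of $\Sigma_1$ in $(M_1, \hat g_{\delta_i, r_i})$, we may assume, as in \cite{HI}, that $\tilde\Sigma_i$ is a smooth minimal surface in $\hat g_{\delta_i, r_i}$ on the portion where it separates from $\Sigma_1$.

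The first step is to produce a uniform reach beyond $\Sigma_3$. By Lemma \ref{deform1}, every leaf of $\mathcal F$ inside $\bar\Omega_3$ has strictly positive mean curvature in $\hat g_{\delta_i, r_i}$, bounded below uniformly in $i$. The maximum principle applied to this mean-convex foliation, exactly as in Proposition \ref{foliationminimizinghullsurf}, forces $\tilde\Sigma_i$ to extend beyond $\Sigma_3$ by some distance $\varepsilon_0 > 0$ independent of $i$, and the minimal-surface area estimate from \cite{MY} then yields a constant $\varepsilon_1 > 0$ with
\[
\Area\bigl(\tilde\Sigma_i \setminus \bar\Omega_3,\ \hat g_{\delta_i, r_i}\bigr) \ge \varepsilon_1
\]
for all $i$.

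Next I would pass to a weak subsequential limit $S_\infty$ of $\tilde\Sigma_i$ as integer multiplicity currents in $(M, g)$. The key normalization is that $u_{\delta_i} \equiv v_{\delta_i, r_i} \equiv 1$ on $\Sigma_1$ and both tend to $1$ at infinity, so $\Area(\Sigma_1, \hat g_{\delta_i, r_i}) = \Area(\Sigma_1, g)$ exactly. Combining this with the $C^2_{\rm loc}$ convergence of the conformal factor $u_{\delta_i} v_{\delta_i, r_i}$ to $1$ on $M_1 \setminus \SO$ (from Proposition \ref{smoothing the metric g} and estimates \eqref{L2 estimate}--\eqref{C2 estimate}) and the fact that the region where the perturbed metric differs from $g$ has $g$-volume tending to zero, one obtains a uniform bound on $\Area(\tilde\Sigma_i, g)$ and, in the limit,
\[
\Area(S_\infty, g) \le \Area(\Sigma_1, g), \qquad \Area(S_\infty \setminus \bar\Omega_3, g) \ge \varepsilon_1.
\]
Since $S_\infty$ encloses $\Sigma_1$, the outer minimizing property furnished by Proposition \ref{foliationminimizinghullsurf} gives $\Area(S_\infty, g) \ge \Area(\Sigma_1, g)$, so equality holds; the strict monotonicity of $V \mapsto \Area(\Sigma_V, g)$ along $\mathcal F$ and the isoperimetric characterization of its leaves then upgrade this to $S_\infty = \Sigma_1$ almost everywhere, which contradicts the second inequality.

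The main obstacle I foresee is the area bookkeeping in the thin $\delta_i$-collar of $\SO$, where the conformal factor $u_{\delta_i}$ is not known to be close to $1$: one must ensure that $\tilde\Sigma_i$ does not accumulate $g$-area there in a way that either breaks the uniform bound on $\Area(\tilde\Sigma_i, g)$ or moves mass out of $M \setminus \bar\Omega_3$ in the limit. The expected resolution uses the shrinking $g$-volume of the collar together with the one-sided bounds $u_{\delta_i} \ge 1$ and $v_{\delta_i, r_i} \le 1$ coming from the maximum principle. A secondary subtlety is ensuring that the volume enclosed by $S_\infty$ stays within the foliation range so that the isoperimetric characterization applies; this should be secured by choosing $\Sigma_3$ sufficiently close to $\Sigma_1$.
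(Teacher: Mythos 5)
Your overall skeleton matches the paper's: argue by contradiction via a minimizing exclosure of $\Sigma_1$ with respect to $\hat g_{\delta_i,r_i}$, note that the separating piece is $\hat g_{\delta_i,r_i}$-minimal, use Lemma~\ref{deform1} together with the maximum principle to force it outside $\bar\Omega_3$, and invoke the Meeks--Yau area lower bound. Where you diverge is in how the contradiction is actually closed, and it is exactly there that the two main gaps sit.

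First, the transfer from $\hat g_{\delta_i,r_i}$-area to $g$-area is not secured by your proposed fix. The one-sided bounds go in opposite directions ($u_{\delta_i}\ge1$ makes the factor large, $v_{\delta_i,r_i}\le1$ makes it small), so $\hat g_{\delta_i,r_i}$ and $g$ are not comparable from one side by these bounds alone, and ``the region where the perturbed metric differs from $g$ has $g$-volume tending to zero'' controls $3$-volume, not the $2$-area of $\tilde\Sigma_i$ that could pile up in the thin $\delta_i$-collar. The paper handles precisely this by an explicit cut-and-paste: replace $\hat\Sigma_k\cap B_\eta(p)$ with a piece of the geodesic sphere $S_\eta(p)$ (of area $O(\eta^2)$, estimated with respect to $g$), leading to the inequality~\eqref{transfer}, and then proves the key quantitative lower bound of Claim~\ref{areaineq} by a divergence-theorem/calibration argument with the distance function $d_1$ and the fact that $\Delta_g d_1>0$ on $\bar\Omega_3$. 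This is the substantive step of the proof, and your proposal has nothing that plays its role; the weak-limit route would require a separately established uniform $L^\infty$ convergence of the product $u_{\delta_i}v_{\delta_i,r_i}$ to $1$, which is plausible but not what you argue.

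Second, your final step ``$S_\infty=\Sigma_1$'' is not justified by what is available. Proposition~\ref{foliationminimizinghullsurf} gives that $\Sigma_1$ is outer minimizing, not strictly outer minimizing, so area equality $\Area(S_\infty,g)=\Area(\Sigma_1,g)$ does not by itself force $S_\infty=\Sigma_1$; the strictly-minimizing-hull of a mean-convex, outer-minimizing surface can in general separate by a minimal piece. You invoke the strict monotonicity of $V\mapsto\Area(\Sigma_V,g)$ and the isoperimetric characterization, which could in principle work, but that requires the volume enclosed by $S_\infty$ (and by the $\tilde\Sigma_i$) to stay inside the isoperimetric range $[0,\bar V]$ where the characterization holds, and you do not verify that. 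The paper avoids this entirely: from Claim~\ref{areaineq} it deduces $\Area(\Sigma_k^2,g)\to0$, then Claim 3 shows via Meeks--Yau that $\hat\Sigma_k\subset\bar\Omega_3$ for large $k$, and the contradiction follows directly from the mean-curvature comparison with the foliation — no limit current is needed.

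In short, you have correctly identified the tools (minimizing hulls, Meeks--Yau, the mean-convex foliation from Lemma~\ref{deform1}) and the overall strategy, but the two steps that actually make the paper's proof close — the cut-and-paste with the divergence-theorem estimate of Claim~\ref{areaineq}, and the resulting control that confines $\hat\Sigma_k$ to $\bar\Omega_3$ — are replaced by appeals that do not hold up as stated.
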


\begin{proof}
Suppose  Lemma \ref{minimizing hull} fails, then there exist a sequence of $\delta_k\leq r_k$ with
$r_k \rightarrow0$, and a sequence of surfaces $\hat{\Sigma}_k$ satisfying
\[
\Area(\hat{\Sigma}_k, \hat{g}_{\delta_k, r_k})<\Area(\Sigma_1, \hat{g}_{\delta_k, r_k}).
\]
In this case, $\hat{\Sigma}_k \setminus \Sigma_1\neq\emptyset$
and each $\hat{\Sigma}_{k}\setminus \Sigma_1$ is a minimal surface with respect to $\hat{g}_{\delta_k, r_k}$.

To get the desired contradiction, we use an explicit ``cut and paste" argument by replacing some pieces of $\hat{\Sigma}_k$ with suitable open surfaces whose area can be easily estimated. We use $\hat{g}_k$ to denote $g_{\epsilon_k, \delta_k}$  for short. Let $R$ be a large constant to be determined.

\begin{cla}\label{bound1}
$\hat{\Sigma}_{k}\subset B_R(p)$ for any $k$ large enough.
\end{cla}

Suppose not, then by the assumption that $\hat{g}_{k}$ and $g_-$ induce the same metric on $\Sigma_1$, we have
\[
\Area(\hat{\Sigma}_{k},\hat{g}_k)\leq \Area(\Sigma_1, \hat{g}_k)=\Area(\Sigma_1, g).
\]

For $k$ large enough,  by the asymptotical flatness of $g_+$,  it's easy to show that
 $|\text{Sec}(\hat{g}_{\delta_k,r_k})|(x)\leq 1$,  for any  $x\in B_R(p)\setminus B_{R/2}(p)$ provided $R$ is large enough.
Then by Lemma 1 in \cite{MY},
\[
\Area(\hat{\Sigma}_{k},\hat g_{k})>CR^2\geq2\Area(\Sigma_1, g)>\Area(\Sigma_1, \hat g_{k}).
\]
Thus, we get the desired contradiction when $R$ is large enough.

Set $\Sigma_k'=\hat{\Sigma}_{_k}\cap\bar{\Omega}_2,\ \Sigma_k''=\hat{\Sigma}_{k}\setminus\Omega_2$ for $k\gg 1$.
We use $U_k$ to denote  the domain enclosed by $\hat{\Sigma}_{k}$ and $\Sigma_1$  and set
\[
\Sigma_k^2=\partial(U_k-\Omega_2)-\Sigma_k'', \ \,\,\text{for}\,\,k\gg 1.
\]

For any given $0<\eta\ll1$,  we only need to consider the case that $\hat{\Sigma}_k\cap B_{\eta}(p)\neq\emptyset$.
Let $\hat{\Sigma}_k''$ the the surface that
\[
\hat{\Sigma}_k''\setminus\Sigma_k''\subset S_{\eta}(p), \ 
\hat{\Sigma}_k''\cap B_{\eta}(p)=\emptyset,\ 
\text{and}\  \partial(\hat{\Sigma}_k^{''}\cap  \bar{B}_{\eta}(p))=\partial(\Sigma_k''\cap  \bar{B}_{\eta}(p)).
\]
By choosing $k$ large enough, we have
\begin{equation}
\begin{split}\label{transfer}
\Area(\hat{\Sigma}_{k}, \hat{g}_{k})=&\Area(\Sigma_k', \hat{g}_{k})+\Area(\Sigma_k'', \hat{g}_{k})\\
\geq&\Area(\Sigma_k', g)+\Area(\hat{\Sigma}_k'', g)-\eta\\
\geq&\Area(\Sigma_k', g)+\Area(\Sigma_k^2, g)-\eta.
\end{split}\end{equation}
\begin{cla}\label{areaineq}
There holds
\begin{equation}\label{excessieq}
\Area\left(\Sigma_k', g\right)+\Area\left(\Sigma_k^2, g\right)\geq\Area\left(\Sigma_1, g\right)+\kappa\Area\left(\Sigma_k^2, g\right),
\end{equation}
where $\kappa$ is a positive constant independent of $k$.
\end{cla}

Indeed, as mentioned  above, $\Sigma_2 $ and $\Sigma_3$ can be regarded as graphs on $\Sigma_1$ by the nearest point projection. Define $\tilde\Sigma^2_k$ to be the set of points on $\Sigma_1$ corresponding to that of $\Sigma^2_k$. Since the volume element is increasing everywhere from $\Sigma_1$ to $\Sigma_2$, we have
\begin{equation*}
(1-\kappa)\Area\left(\Sigma_k^2, g\right)\geq\Area\left(\tilde\Sigma_k^2, g\right)
\end{equation*}
for a certain positive constant $\kappa$ independent of $k$. It is obvious that
$$\Area\left(\tilde\Sigma_k^2, g\right)\geq\Area\left(\tilde\Sigma_k^2\cap\bar U_k, g\right).$$
Thus, if we prove
\begin{equation}\label{alternative}
\Area\left(\Sigma'_k\backslash\Sigma_1,g\right)\geq\Area\left((\Sigma_1\backslash\tilde\Sigma^2_k)\cap\bar U_k,g\right),
\end{equation}
we reach our goal. This can be done by using the divergence theorem.

Let $\Omega^2_k$ be the region  in $\Omega_2$ consisting of geodesic jointing the points in $\Sigma^2_k$ with their nearest point projection in $\tilde \Sigma^2_k$. Consider a function $d_1(x)$ defined on $M$ which is  the distance to $\Sigma_1$ with respect to metric $g$. For points on $\Sigma_1$, we have $\nabla_g d_1=\nu_g$ where $\nu_g$ is the outward unit normal vector field of $\Sigma_1$ with respect to $g$.  And we have $\Delta_gd_1|_{\Sigma_1}=H_1>0$.
 Without loss of generality,
 we may require that $\Sigma_2$ and $\Sigma_3$ are sufficiently close to $\Sigma_1$ such that $\Delta_g d_1>0$ in $\bar\Omega_3.$

We use divergence theorem for $\nabla_g d_1$ in $(\Omega_2\backslash\Omega^2_k)\cap\bar U_k$. Namely, we have
\begin{equation*}
\oint_{\partial((\Omega_2\backslash\Omega^2_k)\cap\bar U_k)}\frac{\partial d_1}{\partial\nu_g}\,dS_g=\int_{(\Omega_2\backslash\Omega^2_k)\cap\bar U_k}\Delta_g d_1\,dV_g>0.
\end{equation*}
By expanding the boundary integral on the left hand, we obtain
\begin{equation*}
\int_{\Sigma'_k\backslash\Sigma_1}\frac{\partial d_1}{\partial\nu_g}\,dS_g+\int_{(\Sigma_1\backslash\tilde\Sigma^2_k)\cap\bar U_k}\frac{\partial d_1}{\partial\nu_g}\,dS_g+\int_{(\partial\Omega^2_k\backslash(\Sigma^2_k\cup\tilde\Sigma^2_k))\cap\bar U_k}\frac{\partial d_1}{\partial\nu_g}\,dS_g>0.
\end{equation*}
Since $|\nabla_gd_1|=1$, we have $$\Area\left(\Sigma'_k\backslash\Sigma_1,g\right)\geq\int_{\Sigma'_k\backslash\Sigma_1}\frac{\partial d_1}{\partial\nu_g}\,dS_g.$$
On $\Sigma_1$, $\nabla_gd_1=-\nu_g$. On $\partial\Omega^2_k\backslash(\Sigma^2_k\cup\tilde\Sigma^2_k)$, $\nabla_gd_1\perp\nu_g$. Then it is not hard to see \eqref{alternative} holds.

By assumption we have
\begin{equation}\label{assum}
\Area(\hat{\Sigma}_{k}, \hat{g}_{k})\leq\Area(\Sigma_1, \hat{g}_{k})=\Area(\Sigma_1, g).
\end{equation}
Combination of \eqref{transfer}, \eqref{excessieq} and \eqref{assum} gives
\[
\kappa\Area\left(\Sigma_k^2, g\right)<\eta,\ \ k\gg1.
\]
Consequently we have
\begin{equation}\label{lambdarea}
\Area\left(\Sigma_k^2, g\right)\rightarrow 0\,\,\,  \text{as}\,\, k\rightarrow\infty.
\end{equation}

\begin{cla}
 For $k$ large enough, we have $\hat{\Sigma}_k\subset\bar{\Omega}_3$.
\end{cla}

Suppose the claim is not true. Note that $\hat{\Sigma}_{k}\setminus\Omega_2$ is a minimal surface.
Then by Lemma 1 in \cite{MY}, we have
\begin{equation}\label{lowerbound}
\Area\left(\Sigma_k'', \hat{g}_k\right)>\delta_1
\end{equation}
for some $\delta_1$ independent of $k$. Then choose $\eta\ll\frac{\delta_1}{2}$ and sufficiently large $k$, we have
\begin{align*}
\Area\left(\hat{\Sigma}_{k}, \hat{g}_{k}\right)=&\Area(\Sigma_k', \hat{g}_{k})+\Area(\Sigma_k'', \hat{g}_{k})\\
\geq&\Area(\Sigma_k', g)-\eta+\delta_1\\
\geq&\Area\left(\Sigma_1, g\right)-\Area\left(\Sigma_k^2, g\right)-\eta+\delta_1\\
=&\Area\left(\Sigma_1, \hat g_k\right)-\Area\left(\Sigma_k^2, g\right)-\eta+\delta_1\\
>&\Area(\Sigma_k, \hat g_k)+\frac{\delta_1}{2}.
\end{align*}
Hence, we see $\hat{\Sigma}_{k}$
 is not a minimizing exclosure of $\Sigma_1$ with respect to $\hat{g}_{k}$ for sufficiently large $k$, which contradicts our assumption. So the claim is true.

Now for large enough $k$, we have $\hat{\Sigma}_{k}\subset\bar{\Omega}_3$ and
$H(\hat{\Sigma}_{k}\setminus\Sigma_1, \hat{g}_{k})\equiv0$.
However, for any leaf $\Sigma'\subset\bar{\Omega}_3$, we have $H(\Sigma', \hat{g}_k)>0$ for $k$ large enough.
By maximum principle, we know that $H(\hat{\Sigma}_{k}\setminus\Sigma_1, \hat{g}_{k})>0$ for $k$ large enough.
Hence, we get the desired contradiction.
\end{proof}
\begin{re}\label{outerminimizing in small perturbation}
Denote by $S_{\rho}$ the coordinate sphere $\{(x_1, x_2, x_3) \in M_1: x^2_1 + x^2_2 +x^2_3=\rho^2\}$ near the infinity
and $B_{\rho}$ the region enclosed by $S_\rho$ and $\Sigma_1$.
Let $g_{\varepsilon,R}$ (may admit corner along $\SO$) be the asymptotically flat metrics in $M_1$ with
\[
\|g_{\epsilon, R} - g\|_{C^0(B_R)} \leq \epsilon, \
\|g_{\epsilon, R} - g\|_{C^2(\bar{\Omega}_3)} \leq \epsilon,\
 \text{and}\ \
\|g_{\epsilon, R}- g\|_{C^2 (B_R\setminus B_{R/2})} \leq \epsilon,
\]
where $R$ is a fixed constant large enough. Using the same argument as above, we can show that for sufficiently small $\varepsilon$,
$\Sigma_1$ is outer minimizing in $(M_1,g_{\varepsilon, R})$.
\end{re}

Next, by the similar arguments in the proof of Theorem 5.2 in \cite{ST}, we are able to show

\begin{lm}\label{estimate2}
There exists some $0<\varepsilon_2\leq \varepsilon_1$ such that for
any $\delta, r$ with
$\delta\leq r\leq\varepsilon_2$, we have
\begin{equation}
\int_{M_1}\psi_{\delta, r}(1-v_{\delta, r})dV_{\tilde{g}_{\delta}}
\leq\frac{1}{2}\int_{M_1}\psi_{\delta, r}dV_{\tilde{g}_{\delta}}.
\end{equation}
\end{lm}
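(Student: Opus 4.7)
The plan is to reduce Lemma~\ref{estimate2} to a uniform sup bound on $1-v_{\delta,r}$. Set $w_{\delta,r}:=1-v_{\delta,r}$. Since $v_{\delta,r}$ is $\tilde g_\delta$-subharmonic by \eqref{conformal equation2} with $v_{\delta,r}\equiv 1$ on $\Sigma_1$ and $v_{\delta,r}\to 1$ at infinity, the maximum principle yields $0\le w_{\delta,r}\le 1$; substitution gives
\[
\Delta_{\tilde g_\delta} w_{\delta,r}=-\psi_{\delta,r}(1-w_{\delta,r}),\qquad w_{\delta,r}\big|_{\Sigma_1}=0,\qquad w_{\delta,r}\to 0\text{ at infinity.}
\]
Let $h_{\delta,r}$ be the unique decaying solution of $\Delta_{\tilde g_\delta}h_{\delta,r}=-\psi_{\delta,r}$ with $h_{\delta,r}|_{\Sigma_1}=0$. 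Then $\Delta_{\tilde g_\delta}(h_{\delta,r}-w_{\delta,r})=-\psi_{\delta,r}\,w_{\delta,r}\le 0$ with vanishing data on $\Sigma_1$ and at infinity, so by the maximum principle $0\le w_{\delta,r}\le h_{\delta,r}$ throughout $M_1$.

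The heart of the proof is the uniform estimate $\sup_{M_1}h_{\delta,r}\le Cr$ for all $\delta\le r\ll 1$, with $C$ independent of $\delta,r$. Granting this and setting $\varepsilon_2:=\min\{\varepsilon_1,(2C)^{-1}\}$, for $\delta\le r\le\varepsilon_2$ one obtains
\[
\int_{M_1}\psi_{\delta,r}(1-v_{\delta,r})\,dV_{\tilde g_\delta}=\int_{M_1}\psi_{\delta,r}w_{\delta,r}\,dV_{\tilde g_\delta}\le Cr\int_{M_1}\psi_{\delta,r}\,dV_{\tilde g_\delta}\le\tfrac{1}{2}\int_{M_1}\psi_{\delta,r}\,dV_{\tilde g_\delta},
\]
which is the claimed inequality.

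To establish the sup bound I would use the Green function representation $h_{\delta,r}(x)=\int_{M_1}G_{\tilde g_\delta}(x,y)\psi_{\delta,r}(y)\,dV_{\tilde g_\delta}(y)$. The metrics $\tilde g_\delta$ are uniformly $C^0$-close to $g$ on $M_1$ and $\Sigma_1$ is a fixed compact interior boundary, so standard potential theory on AF $3$-manifolds delivers a uniform pointwise bound $G_{\tilde g_\delta}(x,y)\le C_1/d_{\tilde g_\delta}(x,y)$ with $C_1$ independent of $\delta$. Working in Fermi coordinates $(y',t)$ adapted to $\SO$ near $p$, the conformal-change identity $R(\tilde g_\delta)=u_\delta^{-4}(R_\delta)_+$ combined with Proposition~\ref{smoothing the metric g} (in Case 1, where $H_->H_+$ near $p$) gives the pointwise bound
\[
\psi_{\delta,r}(y)\;\lesssim\;\delta^{-2}\,\chi_{B_r(p)\cap\{|t|\le\delta^2/100\}}(y)\;+\;\chi_{B_r(p)}(y).
\]
Splitting the potential integral $\int\psi_{\delta,r}(y)/d_{\tilde g_\delta}(x,y)\,dV_{\tilde g_\delta}(y)$ into the spike part and the remainder, and performing the $t$-integration first, one finds the spike contribution $\lesssim r+\delta^2$ and the remainder contribution $\lesssim r^2$, yielding $\lesssim r$ uniformly in $x\in M_1$ and $\delta\le r\le 1$.

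The main obstacle is this uniform estimate for the Riesz-type potential of $\psi_{\delta,r}$ precisely when $x$ lies on the spike, where the singularity of $G_{\tilde g_\delta}$ and the $\delta^{-2}$ concentration of $\psi_{\delta,r}$ conspire. The decisive point is that after integrating the transverse coordinate first, the one-dimensional potential $I(a):=\int_{|s|\le\delta^2/100}ds/\sqrt{a^2+s^2}$ satisfies $I(a)\sim\delta^2/a$ for $a\ge\delta^2$ and $I(a)\sim\log(\delta^2/a)$ for $a\le\delta^2$, both integrable against the $2$-dimensional area element on $\SO$; this is what tames the anisotropy. I remark that the naive Sobolev route, bounding $\|w_{\delta,r}\|_{L^6}\lesssim\|\nabla v_{\delta,r}\|_{L^2}\lesssim r$ from \eqref{L2 estimate} and then pairing with $\|\psi_{\delta,r}\|_{L^{6/5}}$, is insufficient when $\delta\ll r^2$, so the Green-function analysis above appears essential.
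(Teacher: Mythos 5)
Your proposal is correct, but it takes a genuinely different route from the paper's. The paper works in Fermi coordinates $(z,t)$ about $\SO$ and splits $1-v_{\delta,r}(z,t)$ as $\bigl(v_{\delta,r}(z,\theta)-v_{\delta,r}(z,t)\bigr)+\bigl(1-v_{\delta,r}(z,\theta)\bigr)$ for a small fixed offset $\theta$: the first piece is integrated against the spike of $\psi_{\delta,r}$, and after writing it via the fundamental theorem of calculus and integrating out the $\frac{100}{\delta^2}\phi\left(\frac{100t}{\delta^2}\right)$ factor in $t$ (using \eqref{RHH} and Fubini), it is controlled by Cauchy--Schwarz with the energy bound \eqref{L2 estimate} and the thin-slab volume $O(\theta r^2)$, hence is at most $\frac14\int\psi_{\delta,r}$ by \eqref{integral estimate}; the second piece is made $\le\frac14\int\psi_{\delta,r}$ by forcing $1-v_{\delta,r}(\cdot,\theta)\le\frac14$ through the interior estimates \eqref{C2 estimate} once $\theta$ is fixed. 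You instead dominate $1-v_{\delta,r}$ by the potential $h_{\delta,r}$ of $\psi_{\delta,r}$ and prove the stronger uniform pointwise bound $\sup_{M_1}(1-v_{\delta,r})\le Cr$; your comparison $0\le 1-v_{\delta,r}\le h_{\delta,r}$ is sound, your anisotropic Riesz-potential computation (transverse integration first, giving $\lesssim r+\delta^2$ from the spike and $\lesssim r^2$ from the bounded part) is correct, and your remark that the naive $L^{6/5}$--$L^{6}$ pairing fails when $\delta\ll r^2$ is accurate and is exactly why such a pairing is not how the paper argues either. The trade-off: your route yields a sharper, pointwise conclusion and avoids the two-step choice of $\theta$ and then $\varepsilon_2$, but it leans on a uniform Gr\"uter--Widman-type bound $G_{\tilde g_\delta}(x,y)\le C/d_{\tilde g_\delta}(x,y)$ for the whole family, which in turn requires uniform control of $\tilde g_\delta=u_\delta^4 g_\delta$ (uniform Sobolev constants plus a uniform sup bound on $u_\delta$); these are indeed available (since $H_-\ge H_+$, the negative part $R_{\delta-}$ is $O(1)$ on a set of volume $O(\delta)$, so $u_\delta\to1$ uniformly, consistent with \cite{Miao1}), but they should be stated explicitly, whereas the paper's argument is more self-contained, using only the already-established \eqref{integral estimate}, \eqref{L2 estimate} and \eqref{C2 estimate}.
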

\begin{proof}
Let $\theta\ll 1$ be a constant to be fixed.
We use $C$ to denote a uniform constant independent of $\delta, r$ and $\theta$,
 which may vary from line to line.
For $(z,t)\in\SO\times[-\theta, \theta]$, we have
\begin{align}
&\int_{M_1}\psi_{\delta, r}(1-v_{\delta, r})dV_{\tilde{g}_{\delta}}\nonumber\\
=&\int_{B_r(p)}\psi_{\delta, r}(v_{\delta, r}(z,\theta)-v_{\delta, r}(z,t))dV_{\tilde{g}_{\delta}}
+\int_{B_r(p)}\psi_{\delta, r}(1-v_{\delta, r}(z,\theta))dV_{\tilde{g}_{\delta}}.\label{two terms}
\end{align}
To give the estimate of the first term in (\ref{two terms}),
using
\[
\left|v_{\delta,r}(z,\theta)-v_{\delta,r}(z,t)\right|
\leq \left|\int_{t}^{\theta}\frac{\partial}{\partial \tau} v_{\delta, r}(z,\tau)d\tau\right|,
\]
we have
\begin{align}
&\int_{B_r(p)}\psi_{\delta, r}(v_{\delta, r}(z,\theta)-v_{\delta, r}(z,t))dV_{\tilde{g}_{\delta}}\nonumber\\
\leq&\int_{B_r(p)}\psi_{\delta,r}\int_{t}^{\theta}
 |\nabla_{\tilde{g}_{\delta}}v_{\delta, r}|(z,\tau)d\tau dV_{\tilde{g}_{\delta}}\nonumber\\
 \leq&C\int_{B_r(p)\cap\SO\times\left[-\frac{\delta^2}{100}, \frac{\delta^2}{100}\right]}\frac{1}{\delta^2}
 \phi\left(\frac{100t}{\delta^2}\right)\int_{t}^{\theta} |\nabla_{\tilde{g}_{\delta}}v_{\delta, r}|(z,\tau)d\tau dV_{\tilde{g}_{\delta}}\nonumber\\
 \leq&\frac{C}{\delta^2}\int_{-\frac{\delta^2}{100}}^{\frac{\delta^2}{100}}\int_{B_r(p)\cap\SO}
 \int_{t}^{\theta}|\nabla_{\tilde{g}_{\delta}}v_{\delta, r}|(z,\tau)d\tau d\SO dt\nonumber\\
 \leq&C\int_{-\theta}^{\theta}\int_{B_r(p)\cap\SO}
  |\nabla_{\tilde{g}_{\delta}}v_{\delta, r}|(z,t)d\SO dt\nonumber\\
  \leq&C\int_{(\SO\cap B_r(p))\times[-\theta, \theta]}
  |\nabla_{\tilde{g}_{\delta}}v_{\delta, r}|(z,t)dV_{\tilde{g}_{\delta}}\nonumber\\
  \leq&C\left(\int_{M}|\nabla_{\tilde{g}_{\delta}}v_{\delta, r}|^2dV_{\tilde{g}_{\delta}}\right)^{\frac{1}{2}}
  \left(\int_{(\SO\cap B_r(p))\times[-\theta, \theta]}dV_{\tilde{g}_{\delta}}\right)^{\frac{1}{2}}\nonumber\\
  \leq &C\theta r^2.\label{estimate of first term}
\end{align}
Here we have used (\ref{RHH}) in the second inequality and Holder inequality in the penultimate inequality.
Choose $\theta<\frac{\varepsilon_0 }{4C}$, then combining (\ref{integral estimate}) and (\ref{estimate of first term}) gives
\begin{equation}\label{first term}
\int_{B_r(p)}\psi_{\delta, r}(v_{\delta, r}(z,\theta)-v_{\delta, r}(z,t))dV_{\tilde{g}_{\delta}}\leq
\frac{1}{4}\int_{M}\psi_{\delta, r}dV_{\tilde{g}_{\delta}}.
\end{equation}
Once given $\theta$, we may choose $0<\varepsilon_2\leq \varepsilon_1$ such that
\[
0\leq 1-v_{\delta, r}(z,\theta)\leq \frac{1}{4},\ \ \forall z\in\SO.
\]
It follows that
\begin{equation}\label{second term}
\int_{B_r(p)}\psi_{\delta, r}(1-v_{\delta,r})dV_{\tilde{g}_{\delta}}
\leq\frac{1}{4}\int_{B_r(p)}\psi_{\delta, r}dV_{\tilde{g}_{\delta}}.
\end{equation}
Combining (\ref{first term}) and (\ref{second term}) gives the desired result.
\end{proof}

\textbf{Case 2.}
$H_-\equiv H_+$ on $\SO$, but $R(g) >0$ at some point $p \in M_1\setminus \SO$.

In this case, we want to  decrease the scalar curvature a little bit in a neighborhood of a point  by a conformal deformation, and then we want to  investigate the behavior of the solution to the relevant  equation.

For some point $p \in M_1\setminus \SO$, we define a function $\phi\in C^2_0(B_r(p))$ such that $0\leq\phi\leq R_g/2$ with $\phi(p)=\frac{R(p)}{2}$.
Let $f_{\delta, j}$ be a family of $C^2$ functions  defined  in $M_1$ by
\begin{equation}
f_{\delta, j}(x)=\left\{
\begin{aligned}
&\frac{\phi}{2^j}\quad \text{for any } x\in M\setminus \Sigma_0 \times (-\delta, \delta)\\
&-C_1 \quad \text{for any } x\in \Sigma_0 \times [-\frac{\delta}2,\frac{ \delta}2]\\
&-C_1 \leq f_{\delta, j}(x)\leq 0\quad \text{for any } x\in \Sigma_0 \times [-\delta, -\frac{\delta}2]\cup  \Sigma_0 \times [\frac{\delta} 2, \delta].
\end{aligned}
\right.\nonumber
\end{equation}

 For each $j$ and $\delta$, we consider the following equation
\begin{equation}\label{conforma3}
\left\{
\begin{aligned}
\Delta_{g_{\delta}}v_{\delta,j}-\frac{1}{8}f_{\delta, j}v_{\delta,j}&=0\ \ \,\text{in}\ M_1\\
v_{\delta,j}&=1\ \ \,\text{on}\ \Sigma_1\\
v_{\delta,j}&\rightarrow 1\ \ \text{at}\ \infty.
\end{aligned}
\right.
\end{equation}
By the result in \cite{SY} , the above equation has a $C^2$ positive solution
\begin{equation}\label{ asymptotic expansion}
v_{\delta,j}(x)=1+\frac{C_{\delta,j}}{|x|}
+\frac{O(1)}{|x|^2},
\end{equation}
 where $|O(1)|$ can be bounded by a uniform constant independent of $j$ and $\delta$.
We have the following estimates for $\{v_{\delta,j}\}$.

\begin{lm} \label{L6}
There exist uniform constants $j_0, \delta_1$ and $C$ which is independent of $j$  such that for any $j\geq j_0$
 and $\delta\in(0,\delta_1)$, we have
\[
\|v_{\delta,j}-1\|_{L^6}\leq C,\ \
\|\nabla_{g_{\delta}} v_{\delta,j}\|_{L^2}\leq C,\ \
\limsup_{\delta\rightarrow0}\|v_{\delta,j}-1\|_{L^6}\leq \frac{C}{2^j}.
\]
\end{lm}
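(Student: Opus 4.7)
The plan is to set $w := v_{\delta,j} - 1$ and derive a uniform energy estimate by testing the equation with $w$. Note that $w$ vanishes on $\Sigma_1$ and, by the asymptotic expansion \eqref{ asymptotic expansion}, behaves like $C_{\delta,j}/|x| + O(|x|^{-2})$ at infinity with the $O(|x|^{-2})$ term uniformly bounded in $(\delta,j)$; in particular $w\nabla w = O(|x|^{-3})$, so the boundary contribution from a cutoff $\eta_R$ supported in $B_{2R}$ vanishes as $R\to\infty$. Rewriting (\ref{conforma3}) as $\Delta_{g_\delta} w = \tfrac{1}{8} f_{\delta,j}(1+w)$ and multiplying by $w$ therefore gives the identity
\begin{equation*}
\|\nabla_{g_\delta} w\|_{L^2(g_\delta)}^2 = -\tfrac{1}{8}\int_{M_1} f_{\delta,j}\,w^2\,dV_{g_\delta} - \tfrac{1}{8}\int_{M_1} f_{\delta,j}\,w\,dV_{g_\delta}.
\end{equation*}

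Next I would control the right-hand side via Sobolev and H\"older. Since $g_\delta \to g$ in $C^0$ and coincides with $g$ outside the slab $\SO \times (-\delta/2,\delta/2)$, the Sobolev constant $C_S$ of the AF $3$-manifold $(M_1,g_\delta)$ (for functions vanishing on $\Sigma_1$ and at infinity) is uniformly bounded in $\delta$, so $\|w\|_{L^6(g_\delta)} \le C_S\|\nabla w\|_{L^2(g_\delta)}$. H\"older then yields
\[
\Bigl|\int f_{\delta,j}\,w^2\Bigr| \le \|f_{\delta,j}\|_{L^{3/2}}\|w\|_{L^6}^2,\qquad
\Bigl|\int f_{\delta,j}\,w\Bigr| \le \|f_{\delta,j}\|_{L^{6/5}}\|w\|_{L^6}.
\]
The decisive point is that $f_{\delta,j}$ is supported in $B_r(p) \cup (\SO\times[-\delta,\delta])$, with $|f_{\delta,j}| \le \|\phi\|_\infty/2^j$ on the first piece (volume $\sim r^3$) and $|f_{\delta,j}| \le C_1$ on the second (volume $\sim \delta|\SO|$). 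Hence
\[
\|f_{\delta,j}\|_{L^{3/2}} \le C\Bigl(\tfrac{r^2}{2^j}+\delta^{2/3}\Bigr),\qquad
\|f_{\delta,j}\|_{L^{6/5}} \le C\Bigl(\tfrac{r^{5/2}}{2^j}+\delta^{5/6}\Bigr).
\]

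Inserting these into the energy identity and choosing $j_0$ large and $\delta_1$ small enough that $\tfrac{C_S^2}{8}\|f_{\delta,j}\|_{L^{3/2}} \le \tfrac{1}{2}$, one absorbs the quadratic term into the left-hand side to conclude
\[
\|\nabla_{g_\delta} w\|_{L^2(g_\delta)} \le C\Bigl(\tfrac{r^{5/2}}{2^j}+\delta^{5/6}\Bigr),
\]
which is uniformly bounded; the Sobolev inequality then gives $\|v_{\delta,j}-1\|_{L^6} \le C$. For the $\limsup$ assertion, fix $j$ and $r$ and let $\delta \to 0$: the term $\delta^{5/6}$ vanishes, so $\limsup_{\delta\to 0}\|\nabla w\|_{L^2} \le Cr^{5/2}/2^j$, and another application of Sobolev (after absorbing $r^{5/2}$ into $C$) yields $\limsup_{\delta\to 0}\|v_{\delta,j}-1\|_{L^6} \le C/2^j$.

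The main obstacle is the large negative spike of $f_{\delta,j}$ near the corner $\SO$, where $|f_{\delta,j}|$ reaches $C_1$ and cannot be controlled pointwise, so a naive maximum-principle or pointwise argument does not work. What saves the proof is that this spike occupies only a slab of thickness $\delta$, making its $L^{3/2}$ and $L^{6/5}$ norms genuinely small, which is exactly the integrability that the Sobolev--H\"older estimate requires.
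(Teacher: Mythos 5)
Your proof is correct and follows essentially the same approach as the paper: set $w=v_{\delta,j}-1$, multiply the equation by $w$, integrate by parts (justified by the asymptotic expansion), apply H\"older to split the right-hand side into an $L^{3/2}$-paired quadratic term and an $L^{6/5}$-paired linear term, invoke the uniform Sobolev inequality, and absorb the quadratic term by taking $j_0$ large and $\delta_1$ small. The only cosmetic difference is that you make the two $L^p$-norms of $f_{\delta,j}$ explicit in terms of $r^{5/2}/2^j$ and $\delta^{5/6}$ and close the estimate at the level of $\|\nabla w\|_{L^2}$, whereas the paper leaves them as integrals and closes at the level of $\|w\|_{L^6}^2$ after a Cauchy--Schwarz step; the content is the same.
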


\begin{proof}
Set $w_{\delta,j}=v_{\delta,j}-1$. Then $w_{\delta,j}$ satisfies
\begin{equation}\label{equa for w}
\left\{
\begin{aligned}
\Delta_{g_{\delta}}w_{\delta,j}-\frac{1}{8}f_{\delta, j}w_{\delta,j}&=\frac{1}{8}f_{\delta, j}\ \ \,\text{in}\ M_1\\
w_{\delta,j}&=0\ \ \ \ \ \ \ \, \text{on}\ \Sigma_1\\
w_{\delta,j}&\rightarrow 0 \ \ \ \ \ \ \ \text{at}\ \infty.
\end{aligned}
\right.
\end{equation}
It follows that
\[
\int_{M}(w_{\delta,j}\Delta_{g_{\delta}}w_{\delta,j}-\frac{1}{8}f_{\delta, j}w_{\delta,j}^2)dg_{\delta}
=\int_{M}\frac{1}{8}f_{\delta, j}w_{\delta,j}dg_{\delta}.
\]
Integrating by parts and using Holder Inequality, we have
\begin{align}
\int_{M_1}|\nabla_{g_{\delta}} w_{\delta,j}|^2dg_{\delta}
\leq& C
\Big(\int_{M_1}|f_{\delta, j}|^{\frac{3}{2}}\Big)^{\frac{2}{3}}
\Big(\int_{M_1}w_{\delta,j}^{6}dg_{\delta}\Big)^{\frac{1}{3}}\nonumber\\
&+C\Big(\int_{M_1}|f_{\delta, j}|^{\frac{6}{5}}dg_{\delta}\Big)^{\frac{5}{6}}
\Big(\int_{M_1}w_{\delta,j}^6dg_{\delta}\Big)^{\frac{1}{6}}\label{IBP}.
\end{align}
On the other hand, the Sobolev inequality gives
\begin{equation}\label{Sobolev ineq}
\Big(\int_{M_1}w_{\delta,j}^6dg_{\delta}\Big)^{\frac{1}{3}}
\leq C_{\delta}\int_{M_1}|\nabla_{g_{\delta}} w_{\delta,j}|^2dg_{\delta}.
\end{equation}
Since $g_{\delta}$ uniformly converge to $g$, then $C_{\delta}$
 is uniformly close to the Sobolev constant of $g$.
Combining (\ref{IBP}) with (\ref{Sobolev ineq}) and using Cauthy-Schwarz inequality give
\begin{align}
\Big(\int_{M_1}w_{\delta,j}^6dg_{\delta}\Big)^{\frac{1}{3}}
\leq &C\Big(\int_{M_1}|f_{\delta, j}|^{\frac{3}{2}}\Big)^{\frac{2}{3}}
\Big(\int_{M_1}w_{\delta,j}^{6}dg_{\delta}\Big)^{\frac{1}{3}}\nonumber\\
&+C\Big(\int_{M_1}|f_{\delta, j}|^{\frac{6}{5}}dg_{\delta}\Big)^{\frac{5}{3}}
+\frac{1}{2}\Big(\int_{M_1}w_{\delta,j}^6dg_{\delta}\Big)^{\frac{1}{3}}.
\end{align}
Thus, we can find  fixed $j_0$ and $\delta_1$,
such that for any $j\geq j_0$ and $\delta\in(0,\delta_1)$
we have
\begin{equation}\label{l6}
\Big(\int_{M_1}w_{\delta,j}^6dg_{\delta}\Big)^{\frac{1}{3}}
\leq C\Big(\int_{M_1}|f_{\delta, j}|^{\frac{6}{5}}dg_{\delta}\Big)^{\frac{5}{3}}.
\end{equation}
Combining (\ref{IBP}) and (\ref{l6}) gives the desired estimates.
\end{proof}

\section{Proof of main results}
In this section, we  prove  main theorems  by using results established in previous sections.

\begin{thm}\label{H_-=H_+}
Let $g$ be an asymptotically flat  metric satisfying the assumption as Theorem \ref{rigidity}.
Then
\[
H_-(z)=H_+(z)\ \ \text{for all}\ \, z\in\SO.
\]
\end{thm}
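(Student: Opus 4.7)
The plan is to argue by contradiction. Suppose $H_-(p)>H_+(p)$ at some $p\in\SO$; by continuity and the hypothesis $H_-\geq H_+$, the strict inequality persists on a neighborhood of $p$ in $\SO$, placing us in Case 1 of Section 3. The strategy, as outlined in the introduction, is to build a conformally deformed asymptotically flat metric on the exterior region whose ADM mass drops strictly below $\mathfrak m$, while its IMCF-derived Hawking mass lower bound at a suitable inner surface stays at least $\mathfrak m$, producing a contradiction.

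Concretely, I would fix a leaf $\Sigma_1\in\mathcal{F}$ close enough to $\SH$ for Lemmas \ref{deform1}, \ref{minimizing hull} and \ref{estimate2} to apply; Bray's monotonicity along the isoperimetric foliation $\mathcal{F}$ gives $\Hm(\Sigma_1,g_-)\geq\Hm(\SH,g_-)=\mathfrak{m}$. On $M_1$ I would form the doubly deformed metric $\hat g_{\delta,r}=v_{\delta,r}^4 u_\delta^4 g_\delta$ of Case 1: it is smooth and asymptotically flat with $R\geq 0$, agrees with $g_-$ on $\Sigma_1$, and $\Sigma_1$ is outer minimizing in $(M_1,\hat g_{\delta,r})$. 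The Huisken--Ilmanen weak IMCF started at $\Sigma_1$ then yields
\[
\ADMm\bigl(\hat g_{\delta,r}\bigr)\;\geq\;\Hm\bigl(\Sigma_1,\hat g_{\delta,r}\bigr).
\]

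To extract the contradiction, write the expansions $u_\delta=1+A_\delta/|x|+O(|x|^{-2})$ and $v_{\delta,r}=1+B_{\delta,r}/|x|+O(|x|^{-2})$, so that $\ADMm(\hat g_{\delta,r})=\ADMm(g_\delta)+2A_\delta+2B_{\delta,r}$. Since $H_-\geq H_+$, the positive spike in \eqref{RHH} does not contribute to $R_\delta^-$, so $\int_{M_1}R_\delta^-u_\delta\to 0$; combined with the $C^2_{\text{loc}}$ convergence $u_\delta\to 1$ away from $\SO$ and the divergence identity for \eqref{conformal equation1}, this gives $A_\delta\to 0$ and $\ADMm(\tilde g_\delta)\to\mathfrak m$. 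Integrating \eqref{conformal equation2} over $M_1$ against the constant function $1$ gives $-4\pi B_{\delta,r}=\int_{M_1}\psi_{\delta,r}v_{\delta,r}\,dV_{\tilde g_\delta}+\int_{\Sigma_1}\partial_\nu v_{\delta,r}\,dS$; the bulk term is bounded below by $\tfrac{\varepsilon_0}{2}r^2$ by Lemma \ref{estimate2} and \eqref{integral estimate}, while the boundary flux should be shown to be $o(r^2)$ by applying boundary Schauder estimates to the harmonic function $v_{\delta,r}-1$ near $\Sigma_1$ together with the $L^2$ energy bound \eqref{L2 estimate} and Sobolev embedding. Hence $B_{\delta,r}\leq -c\,r^2+o_\delta(1)$ for a fixed $c>0$. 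Finally, since $u_\delta,v_{\delta,r}\to 1$ in $C^1$ on a fixed neighborhood of $\Sigma_1$, the induced area is preserved on $\Sigma_1$ and $\Hm(\Sigma_1,\hat g_{\delta,r})\to\Hm(\Sigma_1,g_-)\geq\mathfrak{m}$. Fixing $r$ small before sending $\delta\to 0$, the chain
\[
\mathfrak{m}-cr^2+o_\delta(1)\;\geq\;\ADMm\bigl(\hat g_{\delta,r}\bigr)\;\geq\;\Hm\bigl(\Sigma_1,\hat g_{\delta,r}\bigr)\;\longrightarrow\;\mathfrak m
\]
becomes contradictory.

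The main obstacle is the boundary flux estimate $\int_{\Sigma_1}|\partial_\nu v_{\delta,r}|\,dS=o(r^2)$: because $\psi_{\delta,r}$ vanishes in a fixed neighborhood of $\Sigma_1$, the function $v_{\delta,r}-1$ is harmonic there with zero boundary trace on $\Sigma_1$ and $L^2$ mass bounded by \eqref{L2 estimate} and Sobolev embedding. Converting this $L^2$ smallness into a gradient bound on $\Sigma_1$ genuinely smaller than $r^2$ is the technical heart of the argument, and requires exploiting the shrinking support of $\psi_{\delta,r}$, likely through capacity or Poisson-integral refinements of the standard Schauder estimate rather than a crude global bound.
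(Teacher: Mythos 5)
Your overall strategy matches the paper's: argue by contradiction, form the doubly conformally deformed metric $\hat g_{\delta,r}=v_{\delta,r}^4 u_\delta^4 g_\delta$ on $M_1$, use outer-minimizing and IMCF to compare $\ADMm(\hat g_{\delta,r})$ with $\Hm(\Sigma_1,\hat g_{\delta,r})$, and exploit the sign information in the conformal factors. However, the step you correctly identify as the technical heart is where your proposal goes wrong, and the paper's proof is organized precisely to avoid it.

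You claim the boundary flux $\int_{\Sigma_1}\partial_\nu v_{\delta,r}$ is $o(r^2)$, but this is false. Writing $w=v_{\delta,r}-1$ and representing $w$ with the Dirichlet Green's function $G$ of $M_1$ vanishing on $\Sigma_1$ and decaying at infinity, one finds $\int_{\Sigma_1}\partial_\nu w=-\int_{M_1} h(y)\,\psi_{\delta,r}(y)v_{\delta,r}(y)\,dV$, where $h(y)=\int_{\Sigma_1}\partial_\nu G(\cdot,y)$ is the harmonic measure of $\Sigma_1$ seen from $y$. Since $\psi_{\delta,r}$ is supported in $B_r(p)$ with $p\in\SO$ at fixed distance from $\Sigma_1$, $h\approx h(p)$ is a fixed constant in $(0,1)$, so the boundary flux is $\Theta(r^2)$, not $o(r^2)$. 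Your chain of inequalities would survive only because $h(p)<1$ strictly, i.e. because $\int_{S_\infty}\partial_\nu w=\int(1-h)\psi v\geq c\,r^2$; this harmonic-measure argument is the correct replacement for your "Poisson-integral refinement" but it is not what you wrote, and the $o(r^2)$ claim as stated is unprovable.

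The paper sidesteps this difficulty entirely with a dichotomy on the limiting boundary flux: for fixed $r$, after sending $\delta_i\to 0$, either $\limsup_i\int_{\Sigma_1}\partial_\nu v_{\delta_i,r}<0$ or it equals $0$. In the first case, since $\partial_\nu v\leq 0$ pointwise and $\partial_\nu u_{\delta_i}\to 0$, the mean curvature of $\Sigma_1$ in $\hat g_{\delta_i,r}$ strictly decreases, so $\Hm(\Sigma_1,\hat g_{\delta_i,r})$ strictly exceeds $\Hm(\Sigma_1,g)\geq\mathfrak m$ in the limit; this contradicts $\ADMm(\hat g_{\delta_i,r})\leq\ADMm(\tilde g_{\delta_i})\to\mathfrak m$. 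In the second case, $\Hm(\Sigma_1,\hat g_{\delta_i,r})\to\Hm(\Sigma_1,g)\geq\mathfrak m$, while the mass-expansion identity together with Lemma \ref{estimate2} and \eqref{integral estimate} forces $\ADMm(\hat g_{\delta_i,r})\leq\mathfrak m-\frac{\varepsilon_0 r^2}{8\pi}+o(1)$, contradicting IMCF. This dichotomy extracts a contradiction in both regimes without any quantitative knowledge of the boundary flux, and is the key idea missing from your proposal.
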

\begin{proof}

 Suppose that
\[
H_{-}(p)-H_{+}(p)>0\ \ \,\text{for some}\  p\in\SO.
\]
Choose a fixed leaf $\Sigma_1 \in \mathcal{F}$ that is very close to $\SH$ with respect to $g_-$. As mentioned above, $H(\Sigma_1)=H_1>0$ and $\Sigma_1$ is outer minimizing in $(M, g)$.

Now we fix some $r\leq\varepsilon_2$ and choose a sequence of $\delta_i\rightarrow 0$. Let $u_{\delta_i}$ and $v_{\delta_{i} ,r}$ be the solutions of Equations (\ref{conformal equation1}) and (\ref{conformal equation2}) respectively with $\delta =\delta_i$, and $\hat{g}_{\delta_i, r}$ be the resulting metrics.
Then by Lemma \ref{deform1} and Lemma \ref{minimizing hull},
for $i$ large enough, $\Sigma_1$ is outer minimizing in $(M, \hat{g}_{\delta_i, r})$
and has positive mean curvature. Note that $\frac{\partial}{\partial \nu}v_{\delta_{i} ,r}\leq 0$ on $\Sigma_1$.
 Then we consider the following two separate cases.

\textbf{Case 1.}\,
$\limsup_{i\rightarrow\infty}\int_{\Sigma_1}\frac{\partial}{\partial \nu}v_{\delta_{i} ,r}<0$.

In this case, using the result of \cite{HI}, we have
\begin{align}
\ADMm(M, \hat{g}_{\delta_i, r})\geq& \Hm(\Sigma_1, \hat{g}_{\delta_i, r})\nonumber\\
=&\sqrt{\frac{\Area(\Sigma_1)}{16\pi}}\left(1-\frac{1}{16\pi}\int_{\Sigma_1}H^2\left(\Sigma_1, \hat{g}_{\delta_i, r}\right)\right).\label{adm-hawking}
\end{align}
A direct computation shows
\[
H(\Sigma_1, \hat{g}_{\delta_i, r})=H_1+4\frac{\partial u_{\delta_i}}{\partial \nu}
+4\frac{\partial v_{\delta_i,r}}{\partial \nu}.
\]
Note that $H(\Sigma_1, \hat{g}_{\delta_i, r})>0$ and
$\frac{\partial}{\partial \nu}u_{\delta_i}\rightarrow 0$ as $i\rightarrow\infty$.
Taking upper limits of the both sides of (\ref{adm-hawking}) and note that the assumption  $\lim\sup_{i\rightarrow\infty}\int_{\Sigma_1}\frac{\partial}{\partial \nu}v_{\delta_{i} ,r}<0$,
we   obtain
\[
\limsup_{i\rightarrow\infty}\ADMm(M, \hat{g}_{\delta_i, r})>\Hm(\Sigma_1, g)\geq \ADMm(M, g).
\]
On the other hand, using $v_{\delta_i, r}\leq 1$ in $M$, we have
\[
\limsup_{i\rightarrow\infty}\ADMm(M, \hat{g}_{\delta_i, r})\leq
 \limsup_{i\rightarrow\infty}\ADMm(M, \tilde{g}_{\delta_i})
 =\ADMm(M, g).
\]
Thus, we get the desired contradiction.

\textbf{Case 2.}\,
$\limsup_{i\rightarrow\infty}\int_{\Sigma_1}\frac{\partial}{\partial \nu}v_{\delta_{i} ,r}=0$.

By the maximum principle, we have $v_{\delta_{i} ,r}\leq 1$ outside $\Sigma_1$. Since $v_{\delta_{i} ,r}\equiv 1$ on $\Sigma_1$, we get $\frac{\partial}{\partial \nu}v_{\delta_{i} ,r}\leq 0$. Hence, in this case we actually have $$\limsup_{i\rightarrow\infty}\frac{\partial v_{\delta_{i} ,r}}{\partial \nu}=0\,\,\,\,\,\text{on}\,\,\Sigma_1.$$
Then we obtain
\begin{equation}\label{estimate3}
\limsup_{i\rightarrow\infty}\ADMm(M, \hat{g}_{\delta_i, r})\geq \limsup_{i\rightarrow\infty}\Hm(\Sigma_1,\hat{g}_{\delta_i, r})\geq \mathfrak{m}.
\end{equation}
On the other hand, we have the following relation.
\begin{equation}
\begin{split}
\ADMm(M, \tilde{g}_{\delta_i})=&\ADMm(M, \hat{g}_{\delta_i, r})+\frac{1}{2\pi} \lim_{\rho\rightarrow\infty}\int_{S_\rho} \frac{\partial v_{\delta_i, r}}{\partial \nu}\\
=&\ADMm(M, \hat{g}_{\delta_i, r})
+\frac{1}{2\pi}\int_M \psi_{\delta_i,r} v_{\delta_i, r}   dV_{\tilde{g}_{\delta_i}}+\frac{1}{2\pi}\int_{\Sigma_1} \frac{\partial v_{\delta_i, r}}{\partial \nu}\\
=&\ADMm(M, \hat{g}_{\delta_i, r})
+\frac{1}{2\pi}\int_M \psi_{\delta_i,r} (v_{\delta_i, r}-1)   dV_{\tilde{g}_{\delta_i}}+\frac{1}{2\pi}\int_{\Sigma_1} \frac{\partial v_{\delta_i, r}}{\partial \nu}\\
&+\frac{1}{2\pi}\int_M \psi_{\delta_i,r} dV_{\tilde{g}_{\delta_i}}.
\end{split}
\end{equation}
Note that $0<v_{\delta_i, r} \leq 1$,  together with Lemma \ref{estimate2}   and (\ref{estimate3}), we get
\begin{equation}
\begin{split}
\mathfrak{m}&\geq\limsup_{i\rightarrow\infty}\ADMm(M, \hat{g}_{\delta_i, r})
+\limsup_{i\rightarrow\infty}\frac{1}{4\pi}\int_M\psi_{\delta_i,r}dV_{\tilde{g}_{\delta_i}}\\
&\geq\mathfrak{m}+\limsup_{i\rightarrow\infty}\frac{1}{4\pi}\int_M\psi_{\delta_i,r}dV_{\tilde{g}_{\delta_i}}.\end{split}
\end{equation}
By Lemma \ref{estimate2},
$$
\limsup_{i\rightarrow\infty}\frac{1}{4\pi}\int_M\psi_{\delta_i,r}dV_{\tilde{g}_{\delta_i}}
\geq\frac{\varepsilon_0 r^2}{8\pi}>0.
$$
Hence, we get the contradiction and finish the proof.
\end{proof}
Next, we are going to show $(\Omega, g_-)$ is static.

\begin{thm}\label{static}
Let $g$ be a  metric satisfying the assumption as Theorem \ref{rigidity}. Then
$(\Omega, g_-)$ must be static with $R(g_-)\equiv0$ in $\Omega$.
\end{thm}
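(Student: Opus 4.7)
The plan is to prove Theorem \ref{static} in two stages. First I will establish $R(g_-)\equiv 0$ in $\Omega$ by contradiction, reusing the Case~2 conformal-deformation framework from Section~3 and running a dichotomy analogous to the proof of Theorem \ref{H_-=H_+}. Then I will reduce the question of staticity to the first stage by invoking a perturbation result of Corvino.

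For stage one, suppose for contradiction that $R(g_-)(p)>0$ at some interior $p\in\Omega$. Since Theorem \ref{H_-=H_+} already gives $H_-\equiv H_+$ on $\SO$, we are precisely in the Case~2 setting of Section~3. I fix a leaf $\Sigma_1\in\mathcal{F}$ close enough to $\SH$ that $p\in M_1$, choose $\phi\in C^2_0(B_r(p))$ with $\phi(p)=R(g_-)(p)/2$ and $\mathrm{supp}\,\phi$ disjoint from $\SO\cup\Sigma_1$, and form the conformal metric $\hat g_{\delta,j}=v_{\delta,j}^4 g_\delta$ via equation \eqref{conforma3}. By construction $R(\hat g_{\delta,j})=v_{\delta,j}^{-4}(R(g_\delta)-f_{\delta,j})\geq 0$ in $M_1$. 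Lemma \ref{L6} together with standard Schauder theory yields $v_{\delta,j}\to 1$ in $L^6(M_1)$ at rate $O(2^{-j})$ and in $C^2_{\mathrm{loc}}$ on compact subsets of $M_1\setminus\SO$; so by Remark \ref{outerminimizing in small perturbation} and an analog of Lemma \ref{deform1}, for $j$ large and $\delta$ small, $\Sigma_1$ is still outer minimizing in $(M_1,\hat g_{\delta,j})$ and carries positive mean curvature. The inverse mean curvature flow of \cite{HI} then gives
\[
\ADMm(\hat g_{\delta,j})\geq\Hm(\Sigma_1,\hat g_{\delta,j}).
\]

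I then fix $j\geq j_0$ and let $\delta\to 0$, splitting according to whether $\limsup_{\delta\to 0}\int_{\Sigma_1}\partial_{\nu}v_{\delta,j}\,dS$ is strictly negative or zero, exactly as in Theorem \ref{H_-=H_+}. In the negative case, since $\Sigma_1$ is a CMC leaf with $H(\Sigma_1,g)=H_1$ constant and $\partial_\nu v_{\delta,j}$ is pointwise of order $O(2^{-j})$ by Lemma \ref{L6} and Schauder bootstrapping, the expansion
\[
\int H(\hat g_{\delta,j})^2=H_1^2|\Sigma_1|+8H_1\int\partial_\nu v_{\delta,j}+16\int(\partial_\nu v_{\delta,j})^2+o(1)
\]
is strictly less than $H_1^2|\Sigma_1|$ for $j$ large (the linear term dominates), forcing $\Hm(\Sigma_1,\hat g_{\delta,j})>\Hm(\Sigma_1,g)\geq\mathfrak{m}$ strictly, while $v_{\delta,j}\leq 1$ (maximum principle) and $\ADMm(g_\delta)\to\mathfrak{m}$ give $\ADMm(\hat g_{\delta,j})\leq\ADMm(g_\delta)+o(1)\to\mathfrak{m}$, contradicting the IMCF bound. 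In the zero case, $\Hm(\Sigma_1,\hat g_{\delta,j})\to\Hm(\Sigma_1,g)\geq\mathfrak{m}$; but from $\ADMm(\hat g_{\delta,j})=\ADMm(g_\delta)+2C_{\delta,j}$ with $v_{\delta,j}=1+C_{\delta,j}/|x|+O(|x|^{-2})$, and the divergence identity
\[
-4\pi C_{\delta,j}=\tfrac{1}{8}\int_{M_1}f_{\delta,j}v_{\delta,j}\,dV_{g_\delta}+\int_{\Sigma_1}\partial_\nu v_{\delta,j}\,dS,
\]
the right-hand side converges as $\delta\to 0$ to $(8\cdot 2^j)^{-1}\int_\Omega\phi\,dV_{g_-}>0$, so $\limsup_\delta\ADMm(\hat g_{\delta,j})<\mathfrak{m}$, again contradicting the IMCF bound. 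Thus $R(g_-)\equiv 0$ on $\Omega$.

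For stage two, suppose $(\Omega,g_-)$ is not static. By Theorem~1 in \cite{Cor}, there is a metric $\bar g$ on $\Omega$ agreeing with $g_-$ in open neighborhoods of both $\SH$ and $\SO$, satisfying $R(\bar g)\geq 0$, and with $R(\bar g)(q)>0$ at some interior $q$. The glued metric $(\bar g,g_+)$ inherits all the hypotheses of Theorem \ref{rigidity}: the corner data $H_-(\bar g)=H_+$ and the minimal/outer-minimizing character of $\SH$ (both local near the respective boundaries, hence unchanged), and the equalities $\ADMm(g_+)=\Hm(\SH,\bar g)=\mathfrak{m}$ (ADM unchanged since $g_+$ is, and $\Hm(\SH)$ unchanged since $\bar g=g_-$ near $\SH$). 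Stage one applied to $(\bar g,g_+)$ then forces $R(\bar g)\equiv 0$, contradicting $R(\bar g)(q)>0$. Hence $(\Omega,g_-)$ is static. The main technical obstacle throughout is verifying the stability of the outer-minimizing property of $\Sigma_1$ under the combined $\delta\to 0$, $j\to\infty$ limits, which requires a uniform version of the cut-and-paste argument of Lemma \ref{minimizing hull} based on the estimates of Lemma \ref{L6}, in the spirit of Remark \ref{outerminimizing in small perturbation}.
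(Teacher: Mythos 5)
Your overall architecture (Case-2 conformal deformation plus IMCF to kill positive scalar curvature, then Corvino to reduce staticity to that case) is the same as the paper's, but there is a concrete flawed step in your stage one. You assert ``$v_{\delta,j}\leq 1$ by the maximum principle'' at finite $\delta$ and use it to bound $\ADMm(\hat g_{\delta,j})$ in your negative case. This is not justified: the function $f_{\delta,j}$ in \eqref{conforma3} equals $-C_1<0$ on the Miao smoothing strip $\SO\times[-\frac\delta2,\frac\delta2]$ (it must, to absorb the $O(1)$ negative scalar curvature of $g_\delta$ there), so the zeroth-order coefficient has the wrong sign for the comparison with the constant $1$; in fact, exactly as $u_\delta\geq 1$ in Case 1, one expects $v_{\delta,j}>1$ somewhere in the strip. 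For the same reason the pointwise bound and sign of $\partial_\nu v_{\delta,j}$ on $\Sigma_1$ that you need for ``the linear term dominates'' (you need $\partial_\nu v\leq 0$ pointwise, not just $\int_{\Sigma_1}\partial_\nu v<0$, in order to control $16\int(\partial_\nu v)^2$ by $8H_1\bigl|\int\partial_\nu v\bigr|$) are not available at finite $\delta$. The paper's proof avoids both problems by passing to the limit $v_j$ (where $f_j\geq 0$, so $v_j\leq1$, $\partial_\nu v_j\leq0$, and by the strong maximum principle and Hopf's lemma $v_j<1$ strictly outside $\bar\Omega$ and $\partial_\nu v_j<0$ on $\Sigma_1$), and then transferring $v_{\delta_i,j}<1$ to a fixed large coordinate sphere $S_R$ to conclude $C_{\delta_i,j}<0$ via harmonicity near infinity; no dichotomy is needed, since the negativity is obtained directly. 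Your zero-case computation via the divergence identity is correct and is a legitimate alternative to the $S_R$ argument, but your negative case must be rerun on the limit function $v_j$ (or replaced by the paper's direct argument) to be valid.

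In stage two there is a smaller but genuine misstatement: the outer-minimizing property of $\SH$ is a global condition, so it is not ``unchanged because the perturbation is local near the boundaries.'' It is also not what you actually need: as in the paper, the right statement is that $\Sigma_1$ remains outer minimizing with positive mean curvature under the combined perturbations (Remark \ref{outerminimizing in small perturbation} together with the smallness from Corvino and Lemma \ref{L6}), while $\Hm(\Sigma_1,\bar g)=\Hm(\Sigma_1,g_-)\geq\Hm(\SH,g_-)=\mathfrak m$ follows from Bray's monotonicity for the original metric because $\bar g=g_-$ near $\SH$. With that correction, your idea of applying Corvino's theorem on all of $\Omega$ with a perturbation compactly supported in the interior, and then choosing $\Sigma_1$ inside the collar where $\bar g=g_-$, is a genuine simplification: it would bypass the paper's exhaustion by the domains $\Omega_i$ and the limiting argument with normalized static potentials $f_i$, which the paper needs because it applies Corvino only on the region between $\Sigma_1$ and $\SO$ and therefore only concludes staticity of those subdomains.
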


\begin{proof}
We only give the proof that $(\Omega, g_-)$ is static since the vanishing of the scalar curvature can be shown in the same way.
We take the contradiction argument.
\begin{cla}Let $\mathcal{F}$ be the foliation of stable CMC surfaces constructed in Proposition \ref{foliation}, $\Sigma_1$ be a leaf in $\mathcal{F}$ and close enough to $\SH$,
then the domain $(\Omega_1, g_-)$ enclosed by $\Sigma_1$ and $\SO$ is static.
\end{cla}

Suppose  not, then by Theorem 1 in \cite{Cor}, there must exist some  perturbation of $g_-$, denoted by $\bar{g}$, such that

$\bullet$ $ \|\bar{g}-g_-\|_{C^2(\bar{\Omega}_1)}\leq \varepsilon$ with $\bar{g}|_{\partial\Omega_1}=g|_{\partial\Omega_1}$.

$\bullet$ $ H(\Sigma_1, \bar{g})=H(\Sigma_1, g_-)$ and $H(\SO, \bar{g})=H(\SO, g_-)$.

$\bullet$ $ R(\bar{g})\geq 0$ with $R(\bar{g})>0$ at some point $q\in \Omega_1$.\\
Here $\varepsilon>0$ is a constant small enough. Without loss of  generality,  we assume that $q\notin \mathcal{F}$. We  focus on the AF manifold $(\Omega_+, g_+)\cup (\bar{\Omega}_1, \bar{g})$. For simplicity, we  denote this manifold by $(M_1,g)$, where
\begin{equation}
g=
\left\{
\begin{aligned}
&g_+\,\,\,\,\,\text{for}\,\,x\in M_1\setminus\Omega,\\
&\bar{g}\,\,\,\,\,\,\,\,\,\text{for}\,\,x\in\Omega_1.
\end{aligned}
\right.
\end{equation}
Then $g$ is  a metric admitting corners along $\SO$.
Furthermore, by Remark \ref{outerminimizing in small perturbation}, we may choose $\varepsilon$ small enough such that
$\Sigma_1$ is still outer minimizing in $(M_1, g)$.
Using the same construction as in the previous section, we get a family of $C^2$ metrics $g_{\delta}$.
Now we fix a $C^1$ vector field $X$ near $\SO$ such that $X(y)=\nu_{\bar{g}}(y)$, for any $ y\in\SO$.

Let $v_{\delta,j}$ be the solution of (\ref{conforma3}). By Lemma \ref{L6}  and  the standard  elliptic estimates, we have
$$\|v_{\delta,j}\|_{W^{2,6}(K)}\leq C_K\,\,\,\, \text{for any} ~K\subset\subset\bar{M_1}.$$
By Theorem 7.26 in \cite{GT},
$\|v_{\delta,j}\|_{C^{1,\alpha}(K)}\leq C_K$ for some $\alpha\in(0,1)$.
Furthermore, we have
\begin{equation}\label{1alpha}
\lim_{\delta\rightarrow 0}\|v_{\delta,j}-1\|_{C^{1,\alpha}(K)}\leq \theta_{K,j}
\end{equation}
with $\theta_{K,j}\rightarrow 0$ as $j\rightarrow \infty$.
Set $V_{\delta_0}=\SO\times(-\delta_0,\delta_0)$, then $\|v_{\delta,j}\|_{C^{1,\alpha}(V_{\delta_0})}$
is uniformly bounded.
On the other hand,
take any compact set $K\subset\bar{M}_1\setminus \SO$,
then the Schauder estimate implies that there exists some constant $C_K$ independent of $j$ and $\delta$ such that
\begin{equation}\label{schauder estimate}
\|v_{\delta,j}\|_{C^{2,\alpha}(K)}\leq C_K.
\end{equation}
Thus, there
exist $\delta_i\rightarrow 0$ such that
$v_{\delta_i,j}\rightarrow v_j$ in the $C^2_{loc}$ sense and
$v_{\delta_i,j}$ strongly converge to $v_j$ in the $C^{1,\frac{\alpha}{2}}(V_{\delta_0})$ sense.
 Moreover, $v_j$ satisfies
\begin{equation}
\left\{
\begin{aligned}
\Delta_g v_j-\frac{1}{8} f_j v_j&=0\ \,\,\text{in}\  M_1,\\
v_j&=1\ \,\,\text{on}\ \Sigma_1,\\
v_j&\rightarrow 1\ \,\,\text{at}\ \infty.
\end{aligned}
\right.
\end{equation}
Here $f_j\geq 0$ and Supp$(f_j)\subset \subset\Omega_1$.
By maximum principle, $v_j(x)\leq 1$.
Furthermore, we have

\begin{cla}
$v_j(x)< 1$,  for any  $x\in M\setminus \bar{\Omega}$.
\end{cla}
Suppose that $v_j(y)=1$ for some $y\in M\setminus \bar{\Omega}$, then we have
\begin{equation}
\left\{
\begin{aligned}
\Delta_gv_j&=0\ \,\,\,\text{in}\  M\setminus \bar{\Omega},\\
v_j&\leq 1\ \ \,\text{on}\  \SO,\\
v_{ j}&\rightarrow1 \ \,\,\text{at}\ \infty.
\end{aligned}
\right.
\end{equation}
Then the strong maximum principle implies that $v_j\equiv1 $ in $M\setminus \bar{\Omega}$.
It follows that $X(v_j)\equiv 0$ on $\SO$.
On the other hand, we have
\begin{equation}
\left\{
\begin{aligned}
\Delta_gv_j-\frac{1}{8}f_jv_j&=0 \ \,\,\, \text{in}\ \Omega_1,\\
v_j&=1\ \,\,\, \text{on}\ \partial\Omega_1.
\end{aligned}
\right.
\end{equation}
Since $v_j\leq 1$  and $f_j>0$ at some point $p\in \Omega_1$, by Hopf's Lemma,
\[
\frac{\partial}{\partial v_j}\nu_{\bar{g}}=X(v_j)>0 \ \,\,\, \text{on}\ \SO.
\]
 Note that $v_j$ is
$C^{1,\alpha}$ in a neighbourhood of $\SO$.
Thus we get the desired contradiction.

The similar argument shows that
$\frac{\partial}{\partial \nu_g}v_j<0$ on $\Sigma_1$.
Thus, for $i$ large enough, we have
$\frac{\partial}{\partial \nu_g}v_{\delta_i,j}<0$ on $\Sigma_1$.
Take a fixed coordinate sphere  $S_{R}$ with $R\gg1$,
then for $i$ large enough, we have
$v_{\delta_i, j}<1$ on $S_{R}$.
Note each $v_{\delta_i, j}$ is a harmonic function in $\{x:|x|\geq R\}$.
It follows $v_{\delta_i, j}<1$ in $\{x:|x|\geq R\}$.
By (\ref{ asymptotic expansion}), we have $C_{\delta_i,j}< 0$.

Set $\tilde{g}_{\delta_i,j}=v_{\delta_i,j}^4g_{\delta_i}$. Then
\begin{align*}
R(\tilde{g}_{\delta,j})=&v_{\delta,j}^{-5}(R(g_{\delta_i})v_{\delta_i,j}-8\Delta_{g_{\delta_i}}v_{\delta_i,j})\\
=&v_{\delta_i,j}^{-5}(R(g_{\delta_i})v_{\delta_i,j}-f_{\delta_i, j}v_{\delta_i,j})\\
\geq&0.
\end{align*}
The ADM mass of $(M,\tilde{g}_{\delta_i,j})$ is given by
\[
\ADMm(M,\tilde{g}_{\delta_i, j})=\mathfrak{m}+2C_{\delta_i, j}<\mathfrak{m}.
\]

Choose a sequence of $\delta_i\rightarrow0$. By (\ref{1alpha}) and (\ref{schauder estimate})
and Remark \ref{outerminimizing in small perturbation},
there exist some $j\gg1$ such that
both $\|v_{\delta_i, j}-1\|_{C^1(B_R)}$ and $\|v_{\delta_i, j}-1\|_{C^2(B_R\setminus B_{R/2})}$
are small enough to guarantee that
for $i$ large enough, $\Sigma_1$ is outer minimizing in $(M_1, \tilde{g}_{\delta_i,j})$ and
$H(\Sigma_1, \tilde{g}_{\delta_i,j})>0$.
Consider the inverse mean curvature flow with
 $\Sigma_1\subseteq (M, \tilde{g}_{\delta_i,j})$ as the initial data.
The result of Huisken-Ilmanen in \cite{HI} implies that
\[
\Hm(\Sigma_1,\tilde{g}_{\delta_i,j})\leq \ADMm(M, \tilde{g}_{\delta_i,j}).
\]
Recall that the Hawking mass of $\Sigma_1$ is given by
\[\Hm(\Sigma_1, \tilde{g}_{\delta_i,j})=\sqrt{\frac{\Area(\Sigma_1)}{16\pi}}
\left(1-\frac{1}{16\pi}\int_{\Sigma_1}H^2\left(\Sigma_1,\tilde{g}_{\delta_i,j}\right)\right)
\]
Note that
\[
H(\Sigma_1,\tilde{g}_{\delta_i,j})=H(\Sigma_1, \bar{g})+4\frac{\partial v_{\delta_i,j}}{\partial \nu_g}.
\]
Thus, letting $i\rightarrow \infty$ gives
\[
\Hm(\Sigma_1,\bar{g})<\mathfrak{m}.
\]
Hence, we see that $(\Omega_1, g_-)$ is static. 

Generally, let $\{\Sigma_i\}$ be a sequence of leaf in $\mathcal{F}$ that approach $\SH$. By the same argument, we can prove $(\Omega_i, g_- |_{\Omega_i})$ is static. According to the definition of static metrics, there are corresponding functions $\{f_i\}$, called the potential functions, satisfying 
\begin{equation}\label{staticequation}
\left\{
\begin{aligned}
\nabla^2 f_i&= f_i \ric(g_-) \ \ \text{in}\,\,\,\Omega_i\\
\Delta f_i&=0 \ \ \ \ \ \ \qquad\,\,\, \text{in}\,\,\,  \Omega_i,\end{aligned}
\right.
\end{equation}
By Proposition 2.5 in \cite{Cor}, $f_i$ is $C^2$ smooth up to the boundary of $\Omega_i$. Without loss of  generality, we may assume there is a $p_i$ in $\bar \Omega_i$ with
$$
f_i (p_i)=\sup_{\Omega_i}|f_i|=1.
$$
Then \eqref{staticequation} implies that there are constants $\Lambda_k$ depending only on $g_-$ and $k$ such that
\begin{equation}\label{estimate4}
\| f_i\|_{C^k(\bar \Omega_i)}\leq \Lambda_k.
\end{equation}

Assume $\{p_i\}$ converges to $p$ which is at the boundary of $\Omega$, together with (\ref{estimate4}), we see that there is a fixed  constant $\theta_0$ with
$$
f_i(x)\geq \frac12,
$$
for any $x\in B_{\theta_0} (p_i)\cap \Omega_i$. Let $f$ be the limit of $\{f_i\}$, then we see that it satisfies (\ref{staticequation}) in $\Omega$,  and $f \in C^{2}(\bar \Omega)$
and is positive in $B_{\theta_0} (p)\cap \Omega$. Thus $(\Omega, g_-)$ is static and $f$ is its nontrivial potential function. Thus we finish prove the Theorem.
\end{proof}

Finally, we want to show if the ADM mass of $(M, g)$ is equal to $\Hm(\SH, g_-)$ then $(\Omega,g_- )$ can be isometrically embedded into a Schwarzschild manifold.
First, we note that
\begin{pro}\label{ localSchwar}
There exists a neighbourhood $U$ near $\SH$ in which $g_-$ is the Schwarzschild metric.
\end{pro}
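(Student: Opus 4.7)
The plan is to prove that the Hawking mass is identically $\mathfrak{m}$ along the whole isoperimetric CMC foliation $\mathcal{F}=\{\Sigma_V\}_{0\le V\le V_1}$ constructed in Section 2, and then to quote the rigidity case of the Hawking-mass monotonicity along such a foliation (from \cite{Bray} in the isoperimetric-foliation form, or equivalently from \cite{HI} via IMCF) to conclude that $g_-$ agrees with the Schwarzschild metric of mass $\mathfrak{m}$ on a collar of $\SH$.

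The first step is the upper bound $\Hm(\Sigma_V,g_-)\le \mathfrak{m}$. Because Theorems \ref{H_-=H_+} and \ref{static} have just been established, $H_-=H_+$ on $\SO$ and $R(g_-)\equiv 0$ in $\Omega$. Thus the singular factor $H_--H_+$ in \eqref{RHH} vanishes, and Miao's smoothing $g_\delta$ of $g$ together with the conformal correction $\tilde g_\delta=u_\delta^4 g_\delta$ solving \eqref{conformal equation1} produces smooth asymptotically flat metrics with $R(\tilde g_\delta)\ge 0$, $u_\delta\to 1$ in $C^2_{\mathrm{loc}}$, and $\ADMm(M,\tilde g_\delta)\to \mathfrak{m}$. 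By Proposition \ref{foliationminimizinghullsurf} and Remark \ref{outerminimizing in small perturbation}, each $\Sigma_V$ remains strictly outer minimizing in $(M,\tilde g_\delta)$ with $H(\Sigma_V,\tilde g_\delta)>0$ for $\delta$ small. The Huisken-Ilmanen weak IMCF out of $\Sigma_V$ in $(M,\tilde g_\delta)$ therefore yields $\Hm(\Sigma_V,\tilde g_\delta)\le \ADMm(M,\tilde g_\delta)$, and sending $\delta\to 0$ gives $\Hm(\Sigma_V,g_-)\le \mathfrak{m}$. On the other hand, the isoperimetric monotonicity of Hawking mass along $\mathcal{F}$ from \cite{Bray}, recorded right after Proposition \ref{foliationminimizinghullsurf}, gives $\Hm(\Sigma_V,g_-)\ge \Hm(\SH,g_-)=\mathfrak{m}$. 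Hence $\Hm(\Sigma_V,g_-)\equiv \mathfrak{m}$ on the entire foliation.

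The rigidity step is then a direct application of the equality case of that Hawking-mass monotonicity. Since $R(g_-)\equiv 0$ on $W_0$ and $\Hm$ is constant along $\mathcal{F}$, the monotonicity inequality is saturated at every $V$, which forces each leaf $\Sigma_V$ to be totally umbilic with constant mean curvature and the normal direction to satisfy the rigidity ODE characterising the Schwarzschild spatial slice. Integrating this ODE through the foliation and matching with the horizon data at $\SH$ identifies $(W_0,g_-)$ with an annular piece of the Schwarzschild manifold of mass $\mathfrak{m}$ whose inner boundary is the horizon; in particular some $U\subset W_0$ is a neighbourhood of $\SH$ on which $g_-$ coincides with the Schwarzschild metric, as claimed.

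The main obstacle is the mass-preservation $\ADMm(M,\tilde g_\delta)\to\mathfrak{m}$ together with the convergence $\Hm(\Sigma_V,\tilde g_\delta)\to\Hm(\Sigma_V,g_-)$ across the $C^2$ smoothing at $\SO$. Under the now-established identities $H_-=H_+$ and $R(g_-)\equiv 0$, the pathological distributional spike in \eqref{RHH} disappears, so the negative part $R_{\delta-}$ driving \eqref{conformal equation1} is $L^1$-small; the estimates of Section 3, in particular the $L^6$ and gradient bounds in Lemma \ref{L6} adapted to this corner-free setting, then furnish the uniform control needed to pass $u_\delta\to 1$ through the integral identities defining the ADM and Hawking masses, so that the Penrose inequality on $(M,\tilde g_\delta)$ saturates in the limit on every $\Sigma_V$ and feeds into the rigidity step above.
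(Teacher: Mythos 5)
Your first step --- showing $\Hm(\Sigma_V,g_-)\equiv\mathfrak{m}$ on the leaves by sandwiching it between Bray's monotonicity from below and the smoothing-plus-conformal-deformation/IMCF argument from above --- is correct and is essentially the same mechanism the paper uses (the upper bound is exactly the ``similar argument as in Theorem \ref{static}'', using Miao's smoothing with the spike in \eqref{RHH} gone, $u_\delta\to1$, convergence of the ADM masses, and Remark \ref{outerminimizing in small perturbation} to keep the leaf outer minimizing). Where you diverge from the paper is the rigidity step, and that is where your proposal has a genuine gap. The paper does \emph{not} conclude from constancy of $\Hm$ along the CMC foliation; it starts the weak inverse mean curvature flow from a leaf $\Sigma'$ inside the collar $W_0$, shows by the same sandwich that $\Hm(X_s)=\Hm(\SH)$ along the \emph{flow} slices, and then quotes the explicit exactness/rigidity discussion of Huisken--Ilmanen (pp.\ 423--424 of \cite{HI}) to identify the swept region with a Schwarzschild annulus; letting $\Sigma'\to\SH$ gives the collar.

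Your substitute --- ``the equality case of the Hawking-mass monotonicity along the isoperimetric foliation'' --- is asserted rather than proved, and it is not available as a clean citable statement in the form you need: \cite{Bray} is invoked in this paper only for the inequality. Saturation of the monotonicity does not, by itself, hand you the conclusion you state. Even granting $R\equiv0$ and umbilicity of each leaf, to reconstruct a warped product $d\rho^2+u(\rho)^2 g_0$ (and then roundness of $g_0$ and the Schwarzschild profile) you must show that the lapse of the CMC foliation is constant on each leaf --- equivalently, that the leaves are equidistant, i.e.\ that the unit-normal flow of a leaf again produces the isoperimetric leaves. Extracting this from equality in Bray's barrier-type comparison for the isoperimetric profile is precisely the hard part of an equality analysis, comparable in substance to the Huisken--Ilmanen rigidity argument, and your sketch (``the rigidity ODE characterising the Schwarzschild spatial slice'') skips it. Your parenthetical fallback ``or equivalently from \cite{HI} via IMCF'' is also not immediate: your constancy is along the CMC foliation, not along an IMCF, so to use the IMCF rigidity you would still have to rerun the sandwich argument for the flow slices --- which is exactly what the paper's proof does. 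So either carry out the equality analysis for the isoperimetric monotonicity in detail, or switch at this point to the paper's route through the weak IMCF.
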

\begin{proof}
Take any leaf $\Sigma'\in\mathcal{F}$ and
 consider the inverse mean curvature flow $\{X_t\}_{t\in[0,T)}$ with $X_0=\Sigma'$ as the initial data.
 Here $T$ is chosen such that every slice $X_s\in W_0, s\in[0, T)$.
 Then  $X_s$ is strictly outer minimizing  for $s\in(0, \frac{T}{2})$.
Using the similar argument as in Theorem \ref{static}, we can show
 \[
 \Hm\left(X_s\right)=\Hm\left(\SH\right), s\in\left(0, T/2\right).
 \]
It follows from  the rigidity result proved by  Huisken-Ilmenen (See P.423-424, \cite{HI}) that $g_-$ is isometric to $g_{\scriptscriptstyle{S}}$ in the domain enclosed by
 $X_{T/2}$ and $\Sigma'$. Since $\Sigma'$ can be arbitrary close to $\SH$. Hence, we complete the proof.
\end{proof}

Let $f_-$ be a potential function of $g_-$ on $\Omega$, $f_{\scriptscriptstyle{S}}$ be the standard potential function on $U$ where $g_-$ is a Schwarzschild metric. Then

\begin{lm}\label{potentialfunctions}
There is a nonzero constant $\lambda$ with
$$
f_-=\lambda f_{\scriptscriptstyle{S}}\ \ \,\text{in}\ U.$$
\end{lm}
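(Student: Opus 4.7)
The plan is to use that on $U$, by Proposition \ref{localSchwar}, the metric $g_-$ coincides with $g_{\scriptscriptstyle{S}}$, so that both $f_-$ and $f_{\scriptscriptstyle{S}}$ are $C^2$ solutions on $U$ of the Schwarzschild static vacuum system
\begin{equation*}
\nabla^2 f = f\,\ric(g_{\scriptscriptstyle{S}}), \qquad \Delta_{g_{\scriptscriptstyle{S}}} f = 0.
\end{equation*}
The problem reduces to showing that the space of solutions of this overdetermined system on $U$ is one-dimensional; once this is established, $f_- = \lambda f_{\scriptscriptstyle{S}}$ for some $\lambda \in \mathbb{R}$, and nontriviality of $f_-$ pins down $\lambda \neq 0$.

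The first step I would record is an ODE rigidity: along any geodesic $\gamma(t) \subset U$, the static equation forces $(f\circ\gamma)''(t) = (f\circ\gamma)(t)\,\ric(g_{\scriptscriptstyle{S}})(\dot\gamma,\dot\gamma)|_{\gamma(t)}$, so a solution is pinned down by its value and gradient at a single point; in particular, the solution space is finite-dimensional, and a solution that vanishes on an open set must vanish on $U$. To cut the space down to dimension one, I would exploit the Schwarzschild symmetry: writing $g_{\scriptscriptstyle{S}} = (1 - 2\mathfrak{m}/r)^{-1} dr^2 + r^2 g_{S^2}$ and decomposing any candidate potential $f$ into spherical-harmonic modes, the static system decouples into radial ODEs for each mode; a direct check shows that only the $\ell = 0$ mode admits a $C^2$ solution on $U$, whose radial ODE has one-dimensional solution space spanned by $f_{\scriptscriptstyle{S}} = \sqrt{1-2\mathfrak{m}/r}$. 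Equivalently, I may invoke the classical uniqueness of the static potential for Schwarzschild, which is implicit in the unique continuation results for static metrics in \cite{CD} and \cite{Cor}.

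Given $f_- = \lambda f_{\scriptscriptstyle{S}}$ on $U$, the final step is to rule out $\lambda = 0$. Indeed, $\lambda = 0$ would give $f_- \equiv 0$ on $U$, and the same ODE rigidity along geodesics in $(\Omega, g_-)$ would then propagate this to $f_- \equiv 0$ on all of $\Omega$, contradicting the normalization $\sup_\Omega |f_-| = 1$ fixed in the proof of Theorem \ref{static}. The main obstacle is precisely the one-dimensionality: while the geodesic ODE rigidity is routine, ruling out higher spherical-harmonic modes or non-radial solutions on a Schwarzschild annulus adjacent to the horizon requires either a careful mode-by-mode analysis (with close attention to $C^2$ regularity up to $\SH$) or an appeal to an external rigidity. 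A clean backup route, should the mode analysis prove delicate, is to set $w := f_-/f_{\scriptscriptstyle{S}}$ on $U \setminus \SH$: a short computation using the static system for both $f_-$ and $f_{\scriptscriptstyle{S}}$ yields $\operatorname{div}(f_{\scriptscriptstyle{S}}^2 \nabla w) = 0$, and integration by parts on the exhaustion $\{f_{\scriptscriptstyle{S}} > \varepsilon\}$ (using that $\SH$ is the totally geodesic horizon of $g_{\scriptscriptstyle{S}}$, which forces $f_-|_{\SH} = 0$ by the $C^2$ regularity of $f_-$ and a comparison with $f_{\scriptscriptstyle{S}}$ along the normal direction) would force $\nabla w \equiv 0$, hence $w \equiv \lambda$.
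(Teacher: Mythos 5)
Your overall strategy is right — reduce to uniqueness of the Schwarzschild static potential on $U$, use ODE rigidity along geodesics to propagate a vanishing 1-jet, and argue $\lambda\neq 0$ from nontriviality of $f_-$ — but the crux of the uniqueness claim is not actually established, and this is where the paper does real work that you are missing.

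Your mode-decomposition route is plausible but is nowhere carried out, and the alternative you prefer ("invoke classical uniqueness of the static potential") just names the lemma you want to prove. Your backup route comes closest to the paper's idea: the vector field $f_{\scriptscriptstyle S}^2\nabla w = f_{\scriptscriptstyle S}\nabla f_- - f_-\nabla f_{\scriptscriptstyle S}$ is precisely the field $K$ that the paper introduces. But you only use $\operatorname{div}K=0$, and the resulting integration by parts has two unaddressed problems. First, $U$ is a collar of $\SH$ with two boundary components, and the flux of $f_{\scriptscriptstyle S}^2\nabla w$ through the outer boundary $\{1\}\times\Sigma$ need not vanish, so you cannot conclude $\int_U f_{\scriptscriptstyle S}^2|\nabla w|^2=0$. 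Second, the claim $f_-|_{\SH}=0$ — which is indispensable and is also the key step in the paper's argument — is asserted rather than proved; the heuristic "$C^2$ regularity plus a comparison along the normal direction" does not work, because $\ric(\nu,\nu)=-2\mathfrak{m}/r^3$ is perfectly bounded at the horizon, so the normal geodesic ODE $(f\circ\gamma)''=(f\circ\gamma)\ric(\dot\gamma,\dot\gamma)$ imposes no regularity obstruction on a nonzero boundary value.

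What closes both gaps in the paper is that $K$ is not merely divergence-free but \emph{Killing} (the symmetric part of $\nabla_X K = (Xf_{\scriptscriptstyle S})\nabla f_- - (Xf_-)\nabla f_{\scriptscriptstyle S}$ vanishes). Since $g_-$ is Schwarzschild on $U$ and the local Killing algebra of Schwarzschild consists of the rotations, all of which are tangent to $\SH$, one gets $K^\perp=0$ on $\SH$. On $\SH$ one has $f_{\scriptscriptstyle S}=0$ and $\nabla f_{\scriptscriptstyle S}\perp\SH$, so $K|_{\SH}=-f_-\nabla f_{\scriptscriptstyle S}$ is purely normal; hence $K|_{\SH}=0$ and $f_-|_{\SH}=0$. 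Then one matches $\nabla f_{\scriptscriptstyle S}=\lambda\nabla f_-$ at one point of $\SH$ (both gradients are nonzero and normal there) and propagates $f_{\scriptscriptstyle S}-\lambda f_-\equiv 0$ via the geodesic ODE rigidity you describe. You should recognize the Killing-field structure of your vector field and use it; the divergence identity alone is not enough.
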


\begin{re}
Without loss of generality, we assume $\lambda=1$ in the sequel.
\end{re}

\begin{proof}
It suffices to show that there is a nonzero constant $\lambda$ with function $f_\lambda =f_{\scriptscriptstyle{S}} -\lambda f_-$ vanishes at a point in  $\SH$ up to the first order.
Indeed, let   $K=f_{\scriptscriptstyle{S}}\nabla f_- -f_-\nabla f_{\scriptscriptstyle{S}}$.  Then  for any vector $X$, we have
\begin{align*}
\nabla_XK=&\nabla_X\left(f_{\scriptscriptstyle{S}}\nabla f_- -f_-\nabla f_{\scriptscriptstyle{S}}\right)\\
=&\left(Xf_{\scriptscriptstyle{S}}\right)\nabla f_- +f_{\scriptscriptstyle{S}}\nabla^2f_ -\left(X,\cdot\right)-\left(Xf_-\right)\nabla f_{\scriptscriptstyle{S}}-f_-\nabla^2f_{\scriptscriptstyle{S}}\left(X,\cdot\right)\\
=&\left(Xf_{\scriptscriptstyle{S}}\right)\nabla f_- -\left(Xf_-\right)\nabla f_{\scriptscriptstyle{S}}.
\end{align*}
Accordingly, for any vector $Y,Z$, we have
\begin{align*}
\langle\nabla_YK,Z\rangle+\langle\nabla_ZK,Y\rangle=0.
\end{align*}
So $f_{\scriptscriptstyle{S}}\nabla f_- -f_-\nabla f_{\scriptscriptstyle{S}}$ is Killing. Note that $g_-$ is Schwarzschild near $\SH$, we have $K^{\perp}=0$ on $\SH$, together with fact that
$f_{\scriptscriptstyle{S}}=0$ on $\SH$, we get $f_-=0$ on $\SH$. Take any point $p$ in $\SH$, let $\lambda$ be the constant with $\nabla f_{\scriptscriptstyle{S}} (p)=\lambda \nabla f_-(p)$,
then $\lambda\neq 0$ and we see that
$f_\lambda =f_{\scriptscriptstyle{S}} -\lambda f_-$ vanishes at $p \in \SH$ up to the first order.
 By ODE argument, we know that $f_\lambda =f_{\scriptscriptstyle{S}} -\lambda f_-\equiv0$ in $U$. Hence, we finish the proof the Lemma.
\end{proof}

For any hypersurface $\Sigma \subset U$, if  its Gauss curvature of the induced metric from $g_-$ is a positive constant, then we may find a smooth function $\phi(\rho)$ so that the metric $g_2$ defined by
\[g_2 = d\rho^2 + \phi^2(\rho) d\omega^2\]
 is the Schwarzschild metric with ADM mass $\mathfrak{m}$, here $\mathfrak{m}=\Hm(\SH, g_-)$, $\rho$ is the distance function to $\Sigma$ with respect to $g_-$, $d\omega^2$ is the standard metric on the $\mathbb{ S}^2$. For simplicity, we use $g_1$ to denote metric $g_-$.
  Without loss of  generality, we may assume  $g_1$, $g_2$ are defined on $U=[0, 1]\times \Sigma $, then they are static metrics on $U$, let  $f_i$,  $i=1$, $2$, be their potential functions, and they coincide on $U_\epsilon=[0, \epsilon]\times \Sigma $, here $\epsilon>0$,  where $f_1=f_2 >0$, we want to show  $g_1$ and $g_2$ is isometric, and $f_1=f_2$ on the whole $U$.
For any point $p \in U$, we denote it as $p=(\rho, \theta)$, here $\rho \in [0, 1]$,
 $\theta=(\theta^1, \dots, \theta^n)$ is the local coordinates on $\Sigma$. Under this coordinates, for $i=1$, $2$, we assume
\begin{equation}
g_i =d\rho^2 +( g_i )_{mn} (\rho, \theta)d\theta^m d\theta^n ,  \nonumber
\end{equation}
and
\begin{equation}\label{condition1}
\begin{split}
&( g_1 )_{mn} (\rho, \theta) = ( g_2 )_{mn} (\rho, \theta), \\
&f_1(\rho, \theta)=f_2(\rho, \theta), \quad \text { for any $\rho \in [0, \epsilon]$, $\theta \in \Sigma$.}
\end{split}
\end{equation}

Then by the  same arguments in the proof of Theorem 1.1 in \cite{CD} (see also the arguments in the proof of Theorem 1 in \cite{Biq}), we have
\begin{pro}\label{uniquecontinuation}
Suppose $(g_i, f_i)$ , $i=1$, $2$, are two smooth and static metrics on $U$, and satisfy (\ref{condition1}), then there is a constant $\epsilon<\delta_0 \leq 1$ which is independent of $\epsilon$ with
\begin{equation}
\begin{split}
&( g_1 )_{mn} (\rho, \theta) = ( g_2 )_{mn} (\rho, \theta), \\
&f_1(\rho, \theta)=f_2(\rho, \theta), \quad \text { for any $\rho \in [0, \delta_0]$, $\theta \in \Sigma$.}
\end{split}
\end{equation}
\end{pro}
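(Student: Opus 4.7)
The plan is to invoke the unique continuation property of the static vacuum Einstein system, following the approach of Chrusciel--Delay \cite{CD} and Biquard \cite{Biq}, adapted to the Gaussian normal slicing used here. The two pairs $(g_i, f_i)$ both satisfy
\begin{equation*}
\nabla^2_{g_i} f_i = f_i\,\operatorname{Ric}(g_i), \qquad \Delta_{g_i} f_i = 0,
\end{equation*}
and, by hypothesis \eqref{condition1}, they agree (together with all normal derivatives, as can be read off from the equations) on the open set $[0,\epsilon)\times\Sigma$. The goal is to propagate this agreement to a slab $[0,\delta_0]\times\Sigma$ whose thickness is controlled only by a priori bounds for $(g_i, f_i)$ on $U$, independent of $\epsilon$.

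The first step is to compute the Ricci components $R_{\rho\rho}$, $R_{\rho m}$, $R_{mn}$ of $g_i = d\rho^2 + (g_i)_{mn}(\rho,\theta) d\theta^m d\theta^n$ in terms of the second fundamental form $K_i = \tfrac12 \partial_\rho (g_i)_{mn}$ of the slices and its intrinsic covariant derivatives. Combined with the Hessian and Laplacian equations for $f_i$, one obtains a second order quasilinear system for the unknowns $((g_i)_{mn}, f_i)$ which, after the gauge reduction used in \cite{CD} (for instance, a harmonic/wave-type coordinate adjustment within each slice), has an elliptic principal symbol. Subtracting the two copies of this system and expanding the nonlinearities about $(g_2, f_2)$, the differences $\tilde h = g_1 - g_2$ and $\tilde f = f_1 - f_2$ satisfy a homogeneous linear second order system $L(\tilde h, \tilde f) = 0$, whose coefficients are built polynomially from $g_i, f_i$ and their first derivatives and are therefore bounded by the given $C^k$ data on $U$.

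At this point I would apply the unique continuation theorem for the linearized static vacuum system, established in \cite{CD,Biq} by a Carleman estimate adapted to a foliation transverse to the propagation direction. Since $(\tilde h, \tilde f)$ vanishes on the open set $[0,\epsilon)\times\Sigma$, the Carleman inequality forces $(\tilde h, \tilde f)\equiv 0$ on a uniform slab $[0,\delta_0]\times\Sigma$ with $\delta_0 > \epsilon$. The claimed independence of $\delta_0$ from $\epsilon$ is built into the Carleman constants: they depend only on bounds for the coefficients of $L$, hence only on the uniform $C^k$-bounds of $(g_i, f_i)$ on $U$, not on where the agreement starts.

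The main obstacle is the gauge issue. The static vacuum equations in raw form are not strictly elliptic, so the Carleman framework of \cite{CD,Biq} only applies after putting the linearized operator into a gauge compatible with the $\rho$-foliation and for which the principal symbol is pseudoconvex across each level set $\{\rho = \mathrm{const}\}$. The technical heart of the argument, therefore, is to carry out this gauge reduction while keeping the hypothesis \eqref{condition1} intact, verify that the resulting operator lies in the class treated by \cite{CD,Biq}, and check that pulling the conclusion back to the original Gaussian normal coordinates remains valid throughout $[0,\delta_0]\times\Sigma$. Once this is arranged, the extension of agreement is an immediate consequence of the cited unique continuation results, and the proposition follows.
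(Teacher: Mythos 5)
Your proposal takes essentially the same route as the paper: the paper's own ``proof'' of Proposition~\ref{uniquecontinuation} consists solely of the citation ``by the same arguments in the proof of Theorem~1.1 in \cite{CD} (see also \cite{Biq})'', and you likewise reduce the statement to the unique continuation machinery of Chr\'usciel--Delay and Biquard for the gauge-reduced static vacuum system in Gaussian normal coordinates, with the $\epsilon$-independence of $\delta_0$ traced to the fact that the Carleman constants depend only on a priori bounds for $(g_i,f_i)$ on $U$. Your additional commentary on the gauge reduction and pseudoconvexity is a reasonable fleshing out of what the cited arguments actually involve, and it matches the intent of the paper's citation.
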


\begin{proof}[Proof of Theorem \ref{rigidity}]
By the assumption of Theorem \ref{rigidity}, we see that for any $p \in \Omega$, it is contained in a Fermi coordinates $\mathcal{O}_p =[0, a)\times W_p$ for some constant $a>0$, here $W_p$ is an open subset of $\SH$ which contains the nearest  point projection of $p$ in $\SH$. In the coordinates $\mathcal{O}_p$, $g_1= g_-= d\rho^2 +( g_1)_{mn} (\rho, \theta)d\theta^m d\theta^n$, and it is Schwarzschild for $\rho<\epsilon$. By Proposition \ref{uniquecontinuation}, we see that $g_-$ is Schwarzschild on $\mathcal{O}_p$.  Due to
Proposition \ref{ localSchwar}, we see that there is isometric $\Psi$: $(\SH, g_-|_{\SH})$: $\mapsto (\partial \mathbb{X}_{\mathfrak{m}}, g_{\scriptscriptstyle{S}}|_{\partial\mathbb{X}_{\mathfrak{m}}})$. Note that there is a global Fermi coordinates on  $(\mathbb{X}_{\mathfrak{m}}, g_{\scriptscriptstyle{S}} )$.
 Hence,  any $q\in \mathbb{X}_{\mathfrak{m}}$ can be expressed as $q=(r, \theta)$ where $r$ is the distance of $q$ to $\partial \mathbb{X}_{\mathfrak{m}}$ with respect to the Schwarzschild metric $g_{\scriptscriptstyle{S}}$, and $\theta \in\partial \mathbb{X}_{\mathfrak{m}}$ is the nearest point projection of $q$ to $\partial \mathbb{X}_{\mathfrak{m}}$ with respect to the Schwarzschild metric $g_{\scriptscriptstyle{S}}$. Then we define
$$
F_p : \mathcal{O}_p \mapsto \mathbb{X}_{\mathfrak{m}}
$$
by
$$
F_p(x)=(\rho(x), \Psi(\bar x)),
$$
for any $x\in \mathcal{O}_p$, and here $\bar x$ is the nearest point projection of $x$ to $\SH$ with respect to metric $g_-$.  By above discussion, we see that $F_p$ is isometric, and for any $p$, $q$ in $\Omega$, we have
$$
F_p |_{\mathcal{O}_p \cap \mathcal{O}_q}= F_q |_{\mathcal{O}_p \cap \mathcal{O}_q}.
$$
Thus, we can extend $F_p$ to an isometric map $F$: $(\Omega, g_-) \mapsto (\mathbb{X}_{\mathfrak{m}}, g_{\scriptscriptstyle{S}})$.
 Hence, we finish the proof.
\end{proof}

\end{document}